\newcommand{\N}{\mathbb N}
\theoremstyle{plain}
\newtheorem{thm}{Theorem}[section] 
\theoremstyle{definition}
\newtheorem{Def}[thm]{\protect\definitionname}
\theoremstyle{plain}
\theoremstyle{plain}
\newtheorem{lemma}[thm]{Lemma}
\theoremstyle{definition}
\theoremstyle{remark}
\newtheorem{Rem}[thm]{Remark}
\newtheorem{Ex}[thm]{Example}
\newcommand\upperleft[3]{\prescript{#1}{}{\mathrlap{\smash{#2#3}}\phantom{#3}}}
\renewcommand{\1}{\mathbbm{1}}
  \providecommand{\definitionname}{Definition}
\begin{document}
\title[Mean convergence for intermediately trimmed Birkhoff sums]
{Mean convergence for intermediately trimmed Birkhoff sums of observables with regularly varying tails}

\author[Kesseb\"ohmer]{Marc Kesseb\"ohmer}
  \address{Universit\"at Bremen, Fachbereich 3 -- Mathematik und Informatik, Bibliothekstr. 1, 28359 Bremen, Germany}
  \email{\href{mailto:mhk@math.uni-bremen.de}{mhk@math.uni-bremen.de}}
\author[Schindler]{Tanja Schindler}
\address{Australian National University, Research School of Finance, Actuarial Studies and Statistics, 26C Kingsley St,
Acton ACT 2601, Australia}
  \email{\href{mailto:tanja.schindler@anu.edu.au}{tanja.schindler@anu.edu.au}}

\keywords{mean convergence, trimmed sum process, transfer operator, spectral method,  $\psi$-mixing, piecewise expanding interval maps}
 \subjclass[2010]{
    Primary:  60F25
    Secondary: 37A05, 37A30,  37A25, 60G10}
\date{\today}

\begin{abstract}
On a measure theoretical dynamical system
with  spectral gap property we consider  non-integrable observables 
with regularly varying tails and fulfilling a mild mixing condition. 
We show that the normed trimmed sum process of these observables then converges in  mean. This result is   new also for the special case of i.i.d.\ random variables
 and contrasts  the general case where mean convergence might fail
even though a strong law of large numbers holds.
To illuminate the required mixing condition we give an explicit example of a dynamical system fulfilling a spectral gap  property and an observable with regularly varying tails but without the assumed mixing condition  such that  mean convergence fails.
\end{abstract}
\maketitle

\section{Introduction and statement of main results}
We consider an ergodic measure preserving dynamical system $\left(\Omega,\mathcal{A},T, \mu\right)$ 
with $\mu$ a probability measure and stochastic processes given by the Birkhoff sums $\mathsf{S}_n\chi\coloneqq\sum_{k=1}^n\chi\circ  T^{k-1}$, $n\in \N$, with $\mathsf{S}_0\chi=0$ for some function $\chi\in \mathcal{M}(\mathcal{A})^{+}\coloneqq \{f:\Omega\to \mathbb{R}_{\geq 0}\colon f \mbox{ is }\mathcal{A}\mbox{-measurable}\}$ sometimes called  observable. 
If $\int\chi\;\mathrm{d}\mu$ is finite,  then we obtain by {\em Birkhoff's ergodic theorem} --\,combining pointwise and mean convergence\,-- that $\mu$-almost surely (a.s.)  
\begin{align*}
 \lim_{n\to\infty}\frac{\mathsf{S}_n\chi}{\int\mathsf{S}_n\chi\;\mathrm{d}\mu}=1,
\end{align*}
i.e.\  the strong law of large numbers is fulfilled with norming sequence $(\int\mathsf{S}_n\chi\;\mathrm{d}\mu)$, whereas 
in the case $\int\chi\;\mathrm{d}\mu=\infty$, 
Aaronson ruled out the possibility of a strong law of large numbers
no matter which norming sequence  
we choose,
see \cite{aaronson_ergodic_1977}.
However, 
in certain cases after deleting a number
of the largest summands from the partial $n$-sums a strong law of large numbers holds.
More precisely,
for each $n\in\mathbb{N}$ we choose a permutation $\sigma\in\mathcal{S}_{n}$
of $\left\{ 0,\ldots,n-1\right\} $ with $\chi\circ  T^{\sigma\left(1\right)}\geq \chi\circ  T^{\sigma\left(2\right)}\geq\ldots\geq \chi\circ  T^{\sigma\left(n\right)}$
and for given $b_{n}\in\mathbb{N}_{0}$ we define 
\begin{align*}
\mathsf{S}_{n}^{b_{n}}\chi & \coloneqq\sum_{k=b_n}^{n-1}\chi\circ  T^{\sigma\left(k\right)}.
\end{align*}

If $b_{n}=r\in\mathbb{N}$ is fixed for all $n\in\mathbb{N}$, then $\left(\mathsf{S}_{n}^{r}\varphi\right)$
is called a \emph{lightly trimmed sum} \emph{process}.
If we allow the sequence $\left(b_{n}\right)\in\mathbb{N}^{\mathbb{N}}$
to diverge to infinity such that $b_{n}=o\left(n\right)$, i.e.\ $\lim_{n\rightarrow\infty}b_{n}/n=0$,
then $\left(\mathsf{S}_{n}^{b_n}\varphi\right)$ is called
an \emph{intermediately} (also \emph{moderately}) \emph{trimmed sum process}.

The special case of \emph{regularly varying} tail variables with index strictly between $-1$ and $0$
has been considered by the authors in \cite{kessebohmer_strong_2018}.
That is for  $F:x\mapsto   \mu\left(\chi\leq x\right)$ denoting the distribution function of $\chi$ we  require that $1-F\left(x\right)= x^{-\alpha}L\left(x\right)$
with $0<\alpha<1$ and $L$ a \emph{slowly varying} function, i.e.\
for every $c>0$ we have $L\left(cx\right)\sim L\left(x\right)$.
Here, $u\left(x\right)\sim w\left(x\right)$ means that $u$ is
asymptotic to $w$ at infinity, i.e.\ $\lim_{x\rightarrow\infty}u\left(x\right)/w\left(x\right)=1$.

Under certain properties of the underlying process to be discussed later
an \emph{intermediately trimmed strong law} has been proven for such observables,
i.e.\ there exist a  non-negative integer sequence $\left(b_n\right)$
tending to infinity with $b_n=o(n)$ and a norming sequence $\left(d_n\right)$ such that 
\begin{align*}
 \lim_{n\to\infty}\frac{\mathsf{S}_n^{b_n}\chi}{d_n}=1\text{ a.s.}
\end{align*}
Additionally, an asymptotic formula for $\left(d_n\right)$ depending on $\left(b_n\right)$ has been provided in \cite[Theorem 1.7]{kessebohmer_strong_2018}.
The condition under which these trimming results hold are in particular a spectral gap property 
for the transfer operator und some regularity conditions on the observable $\chi$, 
the precise conditions are stated as Property $\mathfrak{D}$ in Definition \ref{def: Prop D}.

The above stated intermediately trimmed strong law can be seen as an analog to Birkhoff's ergodic theorem.
However in the finite case, Birkhoff's ergodic theorem also implies
that the norming sequence $(d_n)$ can be chosen as $(\int \mathsf{S}_n\chi\;\mathrm{d}\mu)$, see e.g.\ \cite[Prop.\ 2.4.21]{MR3585883}.
It is the purpose of the present paper to show that for regularly varying tail distributions 
with exponent strictly between $-1$ and $0$  we also have
$d_n\sim \int\mathsf{S}_n^{b_n}\;\mathrm{d}\mu$ 
and  in this way to give an analog statement of the ergodic theorem for the trimmed sum process.
Since  in Lemma \ref{lem: conv prob} we also  show convergence  in probability,  the above asymptotic is  in fact equivalent to mean convergence (Theorem \ref{thm: conv in mean})
  and gives thus also an analog for von Neumann's $\mathcal{L}^{1}$ ergodic theorem.
Crucial for our analysis will be an additional condition on $\left(\chi\circ T^{n-1}\right)$ 
 in terms of the  $\bm{\psi}$-mixing coefficients 
 introduced in Definition \ref{def: phi psi mixing}.
We will show that  Property $\mathfrak{D}$  alone is indeed not strong enough for our main results to hold
by providing an example with large  $\bm{\psi}$-mixing coefficients and which 
can not obey any mean convergence, as $\int\mathsf{S}_n^{b_n}\;\mathrm{d}\mu=\infty$, 
for all $n\in\mathbb{N}$ and any reasonable trimming sequence $(b_{n})$, see Theorem \ref{thm: counterex}.

In the case of general distribution functions and the same mixing conditions it is too much to hope 
for such a mean convergence under trimming.
The authors of this paper gave an example in \cite[Remark 3]{kessebohmer_strong_2016} for i.i.d.\ random variables
for which an intermediately trimmed strong law holds but $\int\mathrm{S}_n^{b_n}\;\mathrm{d}\mu=\infty$, 
for all $n\in\mathbb{N}$.

It is also worth mentioning that the almost sure trimming results mentioned above have predecessors in results for i.i.d.\ random variables
where a vast literature for trimming results both for weak as well as for strong limit theorems exists.
However, to the author's knowledge the result given in this paper has not been proven for i.i.d.\ 
random variables either.
We will here only   give an overview of a number of  strong convergence results.
First of all one realizes that also for i.i.d.\ random variables a
\emph{lightly trimmed strong law} can not hold   for random variables with regularly varying tail with exponent strictly between $-1$ and $0$.
By a lightly trimmed strong law we mean  the existence of $r\in\mathbb{N}$ and a sequence $(d_n)$   of positive reals 
such that $\lim_{n\to\infty}S_n^r\chi/d_n=1$ a.s. 
This can be deduced from the fact that there is no weak law of large numbers for random variables 
with such a distribution function, see \cite[VII.7 Theorem 2 and VIII.9 Theorem 1]{feller_introduction_1971}
and a result by Kesten which states that light trimming has no influence on weak convergence, see \cite{kesten_convergence_1993}.
  However, an  intermediately trimmed strong law in the i.i.d.\ case can be deduced   from  results
by Haeusler and Mason, 
see \cite{haeusler_laws_1987} and \cite{haeusler_nonstandard_1993}. 
They proved generalized laws of the iterated logarithm under trimming from which an intermediately trimmed strong law follows 
and one can also infer a lower bound for $\left(b_n\right)$. 
  Indeed, this lower bound coincides with the lower bound for $(b_n)$ in the dynamical systems case 
given in \cite{kessebohmer_strong_2018}. 
 The examples for which the setting of \cite{kessebohmer_strong_2018}
holds are e.g.\ piecewise expanding interval maps and subshifts of finite type 
as shown in \cite{kessebohmer_strong_2018} and \cite{kessebohmer_intermediately_2019} respectively.
 
Some  other  trimming results have also been generalized to different dynamical system settings where $\bm{\psi}$-mixing also plays an important role. 
In fact, Aaronson and Nakada showed in \cite{aaronson_trimmed_2003} a lightly trimmed strong law 
for $\bm{\psi}$-mixing random variables which have particular distribution functions.
This result generalizes the results of Mori for the i.i.d.\ case, see \cite{mori_strong_1976,mori_stability_1977}.
  Aaronson and Nakada  also gave an example of a non $\bm{\psi}$-mixing process with the same distribution 
function not fulfilling a lightly trimmed strong law.
Furthermore, Haynes gave in \cite{haynes_quantitative_2014} a quantitative generalization for $\bm{\psi}$-mixing random variables 
of a result by Diamond and Vaaler who showed a lightly trimmed strong
law for the continued fraction digits, see \cite{diamond_estimates_1986}.
 Haynes also compared this result with an observable on the doubling map for which the system is strongly mixing 
but not $\bm{\psi}$-mixing and for which a lightly trimmed strong laws   fails to  hold. 
This example also fulfills the spectral gap property and Property $\mathfrak{D}$. 

The results of this paper rely on two main properties:
 First, on an exponential inequality for dynamical systems 
which fulfill a spectral gap property with respect to the transfer operator,
and second, on the $\bm{\psi}$-mixing property.
The proof of  the exponential inequality  given in \cite{kessebohmer_strong_2018} is similar to the Nagaev-Guivarc'h spectral method for the central limit theorem.
The spectral gap property for dynamical systems 
is a typical assumption under which limit theorems for dynamical systems can be proven, 
see the review papers \cite{gouezel_limit_2015} and \cite{fan_spectral_2003} and references therein
for further information and applications of the transfer operator method 
as well as examples of dynamical systems fulfilling a spectral gap property.

During the last decade there has also been some  significant interest 
in other limit theorems for dynamical systems with heavy tailed distributions using transfer operator techniques,
particularly convergence to a stable law, see the paper by Aaronson and Denker, \cite{aaronson_local_2001},
for sufficient and the paper by Gou\"ezel, \cite{gouezel_characterization_2010}, for necessary conditions 
and previous results by Sarig, \cite{sarig_continuous_2006}.
Furthermore, see also the generalization by Melbourne and Zweim\"uller to intermittent maps,
\cite{melbourne_weak_2015}, and by Tyran-Kaminska to a functional convergence, see \cite{tyran_weak_2010}.
The additional condition of  $\bm{\psi}$-mixing  
is often necessary for proving  limit theorems  as illustrated 
above for the trimmed strong laws. Some results have also been proven under 
the combined assumptions of a spectral gap property 
and $\bm{\psi}$-mixing, as for example the law of an iterated logarithm 
for non-integrable random variables by Aaronson and Zweim\"uller, \cite{aaronson_limit_2014}.
 
In Example \ref{ex: interval maps} 
we will give conditions on piecewise expanding interval maps and on the observable $\chi$ such that  
Property $\mathfrak{D}$ as well as the $\bm{\psi}$-mixing condition are fulfilled.
 
\subsection{Basic setting}\label{subsec: setting def}
First we will make the notion of  spectral gap precise and then restate the two crucial properties from \cite{kessebohmer_strong_2018}. 
The first, Property $\mathfrak{C}$, considers dynamical systems with a spectral gap property.
Afterwards we define our main property, Property $\mathfrak{D}$, for which different convergence theorems 
have been proven in \cite{kessebohmer_strong_2018} and under which we will prove  a mean convergence theorem under trimming.

\begin{Def}[Spectral gap]\label{def spec gap}
Suppose $\mathcal{F}$ is a Banach space and $U:\mathcal{F}\to\mathcal{F}$ a bounded linear operator. 
We say that $U$ has a \emph{spectral gap} if there exists a decomposition $U=\lambda P+N$ with $\lambda\in\mathbb{C}$ and $P,N$ bounded linear operators such that 
\begin{itemize}
\item $P$ is a one-dimensional projection, i.e.\ $P^2=P$ and its image is one-dimensional,
\item $N$ is such that $\rho\left(N\right)<\left|\lambda\right|$,  where $\rho$ denotes the spectral radius,
\item $P$ and $N$ are orthogonal, i.e.\ $PN=NP=0$\index{orthogonal}.
\end{itemize}
\end{Def}

\begin{Def}[Property $\mathfrak{C}$, {\cite[Definition 1.1]{kessebohmer_strong_2018}}]\label{def: prop C}
Let $\left(\Omega, \mathcal{A},  T, \mu\right)$ be a dynamical system with $ T$ a non-singular transformation and $\widehat{ T}:\mathcal{L}^1\to \mathcal{L}^1$ be the transfer operator of $ T$, 
i.e.\ the uniquely defined operator such that for all $f\in\mathcal{L}^1$ and $g\in\mathcal{L}^{\infty}$ we have
\begin{align}
\int \widehat{ T}f\cdot g\;\mathrm{d}\mu=\int f\cdot g\circ T\;\mathrm{d}\mu,\label{hat CYRI}
\end{align}
see e.g. \cite[Section 2.3]{MR3585883} for further details.
Furthermore, let $\mathcal{F}$ be subset of the measurable functions forming a Banach algebra with respect to the  norm $\left\|\cdot\right\|$.
We say that $\left(\Omega, \mathcal{A}, T , \mu,\mathcal{F},\left\|\cdot\right\|\right)$ has Property $\mathfrak{C}$ if the following conditions hold:
\begin{itemize}
\item
$\mu$ is a $T $-invariant, mixing probability measure.
\item
$\mathcal{F}$ contains the constant functions and for all $f\in\mathcal{F}$ we have
\begin{align}
\left\|f\right\|\geq \left|f\right|_{\infty}.\label{ineq <l}
\end{align}
\item 
$\widehat{T}$ is a bounded linear operator with respect to $\left\|\cdot\right\|$, i.e.\ there {exists} a constant $K_0>0$ such that for all $f\in\mathcal{F}$ we have
\begin{align}
\left\|\widehat{T}f\right\|\leq K_0\cdot\left\|f\right\|.\label{C 0 f}
\end{align}
\item 
$\widehat{T}$ acting on the Banach space  $\mathcal{F}$ with norm $\left\|\cdot\right\|$ has a spectral gap.
\end{itemize}
\end{Def}
The above mentioned property is a widely used setting for dynamical systems. 
In particular it implies that the transfer operator has $1$ 
as a unique and simple eigenvalue on the unit circle and that an exponential decay of correlation is guaranteed.
 
However, in order to state our main theorems we need additional assumptions on the observable $\chi$ defined on a system fulfilling Property $\mathfrak{C}$.
\begin{Def}[Property $\mathfrak{D}$, {\cite[Definition 1.2]{kessebohmer_strong_2018}}]\label{def: Prop D}
We say that $\left(\Omega, \mathcal{A}, T , \mu,\mathcal{F},\left\|\cdot\right\|,\chi\right)$ has Property $\mathfrak{D}$ if the following conditions hold:
\begin{itemize}
\item $\left(\Omega, \mathcal{A}, T , \mu,\mathcal{F},\left\|\cdot\right\|\right)$ fulfills Property $\mathfrak{C}$.
\item $\chi\in \mathcal{M}(\mathcal{A})^{+}$ and  
 with $\upperleft{\ell}{}{\chi}\coloneqq \chi\cdot\mathbbm{1}_{\left\{\chi\leq \ell\right\}}$ there exists
$K_{1}>0$ such that for all $\ell\in\mathbb{R}_{\geq 0}$, 
\begin{align}
\left\|\upperleft{\ell}{}{\chi}\right\|\leq K_{1}\cdot \ell
\;\;\;\; \mbox{ and }\;\;\;\;
\left\|\mathbbm{1}_{\left\{\chi>\ell\right\}}\right\|\leq K_{1}.\label{C 1} 
\end{align}

\end{itemize}
\end{Def}

Finally, to state our main theorem we give the precise definition of $\bm{\psi}$-mixing following \cite{bradley_basic_2005}. Note that in the literature there are sometimes subtle differences defining this notion.
\begin{Def}\label{def: phi psi mixing}
Let $\left(\Omega,\mathcal{A},\mathbb{P}\right)$ be a probability measure space and $\mathcal{B},\mathcal{C}\subset\mathcal{A}$ two $\sigma$-fields, then the following measure of dependence is defined
\begin{align*}
\bm{\psi}\left(\mathcal{B},\mathcal{C}\right)&\coloneqq \sup\left\{\left|\frac{\mathbb{P}\left(B\cap C\right)}{\mathbb{P}\left(B\right)\cdot \mathbb{P}\left(C\right)}-1\right|\colon B\in\mathcal{B}, C\in\mathcal{C}, \mathbb{P}\left(B\right), \mathbb{P}\left(C\right)>0\right\}.
\end{align*}

Furthermore, let $\left(X_{n}\right)_{n\in\mathbb{N}}$ be a (not necessarily stationary) sequence of random variables. For $-\infty\leq J\leq L\leq\infty$ we can define a $\sigma$-field by
\begin{align*}
\mathcal{A}_{J}^{L}\coloneqq \sigma\left(X_{k},J\leq k\leq L, k\in\mathbb{Z}\right).
\end{align*}
With this at hand the  {\em $\bm{\psi}$-mixing  coefficients} are defined by
\begin{align*}
\bm{\psi}\left(n\right)&\coloneqq \sup_{k\in\mathbb{N}}\bm{\psi}\left(\mathcal{A}_{-\infty}^{k},\mathcal{A}_{k+n}^{\infty}\right).
\end{align*}
The sequence of random variables $\left(X_{n}\right)$ is said to be \emph{$\bm{\psi}$-mixing}   if  $\lim_{n\to\infty}\bm{\psi}\left(n\right)=0$.
\end{Def}

\subsection{Main results}\label{subsec: main results}
For $L$ being slowly varying we denote by $L^{\#}$
a de Bruijn conjugate of $L$, i.e.\ 
a slowly varying function satisfying 
\begin{align*}
\lim_{x\rightarrow\infty}L\left(x\right)\cdot L^{\#}\left(xL\left(x\right)\right)  =1=
\lim_{x\rightarrow\infty}L^{\#}\left(x\right)\cdot L\left(xL^{\#}\left(x\right)\right).
\end{align*}
For more details  see \cite[Section 1.5.7 and Appendix 5]{bingham_regular_1987}.
Then our  first main result reads as follows.

\begin{thm}\label{thm: conv in mean}
Let $\left(\Omega, \mathcal{A}, T , \mu,\mathcal{F},\left\|\cdot\right\|,\chi\right)$ fulfill Property $\mathfrak{D}$
and assume that $\mu\left(\chi>x\right)=L\left(x\right)/x^{\alpha}$, where $L$ is a slowly varying function and $0<\alpha<1$.
Further, let $\left(b_n\right)$ be a sequence of natural numbers tending to infinity such that $b_n=o\left(n\right)$.
We assume that at least one $\bm{\psi}$-mixing coefficient  of the sequence of random variables $\left(\chi\circ T^{n-1}\right)_{n\in\mathbb{N}}$ is strictly less than one.
Then mean convergence holds with norming sequence 
\begin{align}
d_n\coloneqq
\frac{\alpha}{1-\alpha}\cdot n^{1/\alpha}\cdot b_{n}^{1-1/\alpha}\cdot \left(L^{-1/\alpha}\right)^{\#}\left(\left(\frac{n}{b_{n}}\right)^{1/\alpha}\right),\label{eq: def dn}
\end{align}
that is
\begin{align*}
\lim_{n\to\infty}\int\left|\frac{\mathsf{S}_n^{b_n}\chi}{d_n}-1\right|\,\mathrm{d}\mu=0.
\end{align*}
\end{thm}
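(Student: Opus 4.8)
The plan is to derive the mean‑convergence statement from the convergence in probability supplied by Lemma~\ref{lem: conv prob} together with the single asymptotic relation $\int\mathsf{S}_n^{b_n}\chi\,\rmd\mu\sim d_n$. Writing $X_n\coloneqq\mathsf{S}_n^{b_n}\chi/d_n\ge 0$, once $\int\mathsf{S}_n^{b_n}\chi\,\rmd\mu$ is known to be finite for all large $n$ one has the elementary identity
\[
\int\bigl|X_n-1\bigr|\,\rmd\mu=\Bigl(\int X_n\,\rmd\mu-1\Bigr)+2\int(1-X_n)^{+}\,\rmd\mu ,
\]
and since $0\le(1-X_n)^{+}\le 1$ while $X_n\to 1$ in probability, bounded convergence forces $\int(1-X_n)^{+}\,\rmd\mu\to 0$; thus it suffices to prove the finiteness and $\int X_n\,\rmd\mu\to 1$.

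For the asymptotic I would compare the trimmed sum with the truncated Birkhoff sum $\mathsf{S}_n(\upperleft{t_n}{}{\chi})$, where $t_n$ is a generalised inverse of $x\mapsto n\mu(\chi>x)=b_n$, so that $n\mu(\chi>t_n)\sim b_n$ (regular variation of the tail makes such a $t_n$ asymptotically well defined). Inverting $t\mapsto t\,L(t)^{-1/\alpha}$ by the de Bruijn conjugate calculus gives $t_n\sim(n/b_n)^{1/\alpha}(L^{-1/\alpha})^{\#}\bigl((n/b_n)^{1/\alpha}\bigr)$, so that $\tfrac{\alpha}{1-\alpha}\,b_n t_n$ is asymptotic to the sequence $d_n$ of \eqref{eq: def dn}. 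On the other hand, by $T$‑invariance $\int\mathsf{S}_n(\upperleft{t_n}{}{\chi})\,\rmd\mu=n\int\upperleft{t_n}{}{\chi}\,\rmd\mu=n\bigl(\int_0^{t_n}\mu(\chi>x)\,\rmd x-t_n\mu(\chi>t_n)\bigr)$, and since $0<\alpha<1$ Karamata's theorem gives $\int_0^{t_n}\mu(\chi>x)\,\rmd x\sim\tfrac{1}{1-\alpha}t_n\mu(\chi>t_n)$, whence $\int\mathsf{S}_n(\upperleft{t_n}{}{\chi})\,\rmd\mu\sim\tfrac{\alpha}{1-\alpha}\,t_n\cdot n\mu(\chi>t_n)\sim d_n$. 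Everything is therefore reduced to the estimate
\[
\int\bigl|\mathsf{S}_n^{b_n}\chi-\mathsf{S}_n(\upperleft{t_n}{}{\chi})\bigr|\,\rmd\mu=o(b_n t_n).
\]

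To obtain it, put $N_n(x)\coloneqq\#\{1\le j\le n:\chi\circ T^{j-1}>x\}$ and let $M_n^{(1)}\ge\cdots\ge M_n^{(n)}$ be the order statistics of $(\chi\circ T^{j-1})_{j=1}^{n}$. Comparing term by term, on $\{N_n(t_n)\ge b_n\}$ one has $\mathsf{S}_n^{b_n}\chi-\mathsf{S}_n(\upperleft{t_n}{}{\chi})=\sum_{k=b_n+1}^{N_n(t_n)}M_n^{(k)}\ge 0$, and on $\{N_n(t_n)<b_n\}$ one has $0\le\mathsf{S}_n(\upperleft{t_n}{}{\chi})-\mathsf{S}_n^{b_n}\chi=\sum_{k=N_n(t_n)+1}^{b_n}M_n^{(k)}\le(b_n-N_n(t_n))t_n$; rewriting the first sum by the layer‑cake formula (using $\#\{b_n<k\le N_n(t_n):M_n^{(k)}>x\}=(N_n(x)-b_n)^{+}$ for $x\ge t_n$) one arrives at
\[
\int\bigl|\mathsf{S}_n^{b_n}\chi-\mathsf{S}_n(\upperleft{t_n}{}{\chi})\bigr|\,\rmd\mu\le t_n\int\bigl|N_n(t_n)-b_n\bigr|\,\rmd\mu+\int_{t_n}^{\infty}\int\bigl(N_n(x)-b_n\bigr)^{+}\,\rmd\mu\,\rmd x .
\]
For the first term, $\int N_n(t_n)\,\rmd\mu=n\mu(\chi>t_n)\sim b_n$, and since $\mathbbm{1}_{\{\chi>t_n\}}\in\mathcal F$ with $\|\mathbbm{1}_{\{\chi>t_n\}}\|\le K_1$ by Property~$\mathfrak D$, the spectral gap of $\widehat T$ together with the bound $|\cdot|_\infty\le\|\cdot\|$ of Property~$\mathfrak C$ makes the centred correlations of $\mathbbm{1}_{\{\chi>t_n\}}$ decay geometrically and carry a factor $\mu(\chi>t_n)$, so $\var(N_n(t_n))=O(n\mu(\chi>t_n))=O(b_n)$ and hence $\int|N_n(t_n)-b_n|\,\rmd\mu=O(\sqrt{b_n})=o(b_n)$. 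For the $x$‑integral I would split $[t_n,\infty)=[t_n,T_n]\cup(T_n,\infty)$, choosing $T_n$ with $n\mu(\chi>T_n)$ equal to a small fixed constant, so that $(1+\psi(n_0))\,n\mu(\chi>x)<1$ for $x>T_n$, where $\psi(n_0)<1$ is the assumed mixing coefficient. On $[t_n,T_n]$ the exponential inequality for systems with a spectral gap from \cite{kessebohmer_strong_2018}, applied to $\mathbbm{1}_{\{\chi>x\}}$, controls $\int(N_n(x)-b_n)^{+}\,\rmd\mu$: at the natural $\sqrt{b_n}$‑scale near $x=t_n$, and, once $n\mu(\chi>x)$ has dropped below say $b_n/2$, at an exponential rate in $b_n$ (as $\{N_n(x)>b_n\}$ is then a large deviation above the mean); since $T_n$ grows only polynomially in $n$, this contributes $o(b_n t_n)$. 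On $(T_n,\infty)$ I use the mixing hypothesis: among any $m$ exceedance times of a level $x$ there are $r\coloneqq\lceil m/n_0\rceil$ that are pairwise at distance $\ge n_0$, and iterating $\mu(B\cap C)\le(1+\psi(n_0))\mu(B)\mu(C)$ with the $T$‑invariance of $\mu$ gives $\mu(N_n(x)\ge m)\le\binom{n}{r}(1+\psi(n_0))^{r}\mu(\chi>x)^{r}\le\tfrac{1}{r!}\bigl((1+\psi(n_0))\,n\mu(\chi>x)\bigr)^{r}$; since $b_n\to\infty$ we have $\lceil b_n/n_0\rceil>1/\alpha$ for large $n$, so $\mu(\chi>x)^{r}$ is integrable over $(T_n,\infty)$ and the factorial outweighs $T_n$, making this contribution $o(b_n t_n)$ as well. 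The same estimate with $m=b_n+1$ shows $\int M_n^{(b_n+1)}\,\rmd\mu<\infty$, hence $\int\mathsf{S}_n^{b_n}\chi\,\rmd\mu\le n\int M_n^{(b_n+1)}\,\rmd\mu<\infty$ for all large $n$, which settles the finiteness left open in the first step.

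The step I expect to be the main obstacle is precisely this error analysis, and especially rendering the contribution of the largest summands negligible while assuming only that a \emph{single} coefficient $\psi(n_0)$ is $<1$ rather than $\psi(n)\to 0$: the binomial factor $\binom{n}{r}$ competes with the smallness of $\mu(\chi>x)^{r}$, which forces the careful choice of the crossover level $T_n$, and throughout one must confirm that every error term is genuinely $o(b_n t_n)$ and not merely $O(b_n t_n)$. This strict gain over the already known trimmed strong law is exactly what upgrades it to mean convergence; as Theorem~\ref{thm: counterex} shows, it collapses as soon as the mixing assumption is dropped.
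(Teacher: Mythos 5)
Your plan is a genuinely different route from the paper's. The paper proves the mean asymptotic (Lemma~\ref{lem: mean assymp}) by a dyadic decomposition over the levels $2^{k}g_{n}$ (events $\Phi_{k,n}$), and its main technical device is the $\bm\psi$-mixing decoupling
$\int\upperleft{2^{k}g_n}{}{\chi}\circ T^{i-1}\cdot\mathbbm{1}_{D_{k,n,J}}\,\rmd\mu\le\tfrac{(1+\bm\psi(r))^{2}}{1-\bm\psi(r)}\mu(D_{k,n,J})\int\upperleft{2^{k}g_n}{}{\chi}\circ T^{i-1}\,\rmd\mu$, used to separate the truncated sum from the counting event; the split between the exponential estimate and the multi-exceedance mixing estimate is taken at the level index $k=b_n^{1/2}$, and the two lemmas are combined via Pratt's theorem. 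You instead pass to the counting function $N_n(x)$ and use the exact layer-cake identity $|\mathsf{S}_n^{b_n}\chi-\mathsf{T}_n^{t_n}\chi|\le t_n|N_n(t_n)-b_n|+\int_{t_n}^{\infty}(N_n(x)-b_n)^{+}\,\rmd x$ (which I verified is correct), replacing the paper's structural decoupling entirely; the mixing hypothesis then enters only through multi-exceedance bounds at high levels, and you use the elementary identity $\int|X_n-1|=(\int X_n-1)+2\int(1-X_n)^{+}$ plus bounded convergence in place of Pratt's theorem. This is cleaner in structure, and the finiteness and variance steps are fine (note that for $t_n|N_n(t_n)-b_n|$ you only need $o(b_n)$, which is safer than the claimed $O(\sqrt{b_n})$ if $F$ has atoms).

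There is however a genuine gap in the error analysis on $[t_n,T_n]$, precisely in the step you flagged as the main obstacle. The justification \emph{``since $T_n$ grows only polynomially in $n$, this contributes $o(b_nt_n)$''} is wrong as written: since the hypotheses allow $b_n$ to grow arbitrarily slowly (e.g.\ $b_n\approx\log\log n$), an exponential $\exp(-cb_n)$ does \emph{not} defeat a polynomial in $n$. Two repairs are needed. First, the crude bound $\int(N_n(x)-b_n)^{+}\,\rmd\mu\le n\,\mu(N_n(x)>b_n)\le Kn\exp(-cb_n)$ carries a fatal extra factor $n$; you must instead write $\int(N_n(x)-b_n)^{+}\,\rmd\mu=\sum_{m>b_n}\mu(N_n(x)\ge m)$ and apply Lemma~\ref{lemma: Tnfn chi deviation allg} to each threshold $m$, which gives $\mu(N_n(x)\ge m)\le K\exp(-cm)$ once $n\mu(\chi>x)\le b_n/2$, hence $\int(N_n(x)-b_n)^{+}\,\rmd\mu=O(\exp(-cb_n))$ with no factor $n$. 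Second, the relevant comparison is that $T_n/(b_nt_n)$ is $O(b_n^{1/\alpha-1+\varepsilon})$, i.e.\ \emph{polynomial in $b_n$}, not merely that $T_n$ is polynomial in $n$; it is this together with the refined $O(\exp(-cb_n))$ bound (and the factorial decay on $(T_n,\infty)$) that yields $o(b_nt_n)$. With these two corrections the argument goes through, but as stated the middle-range estimate does not close.
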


\begin{Rem}
It has been shown in \cite{kessebohmer_strong_2018} that under Property $\mathfrak{D}$ and with $(b_n)$ growing sufficiently fast we have   $\lim_{n\to\infty}\mathsf{S}_n^{b_n}\chi/d_n=1$ a.s.\ where $(d_n)$ shows the same asymptotic 
as \eqref{eq: def dn}.
Combining this statement with Theorem \ref{thm: conv in mean} 
yields that for sufficiently fast growing $(b_n)$ we have 
\[\lim_{n\to\infty}\frac{\mathsf{S}_n^{b_n}\chi}{\int \mathsf{S}_n^{b_n}\chi\;\mathrm{d}\mu}=1\text{ a.s.}
 \]
\end{Rem}
\begin{Rem}
 Note that  $\bm{\psi}$-mixing is sufficient but not necessary    for  Theorem \ref{thm: conv in mean}. 
\end{Rem}

For our main example, we first define the space of functions of bounded variation. 
For simplicity, we restrict ourself to the interval $\left[0,1\right]$ and let $\mathcal{B}$ denote the Borel sets of $\left[0,1\right]$. 
\begin{Def}\label{def: bound var}
Let $\varphi:\left[0,1\right]\to\mathbb{R}$. Then the variation $\mathsf{var}\left(\varphi\right)$ of $\varphi$ is given by 
\begin{align*}
\mathsf{var}\left(\varphi\right)\coloneqq\sup\left\{\sum_{i=1}^n\left|\varphi\left(x_i\right)-\varphi\left(x_{i-1}\right)\right|\colon n\geq 1, x_i\in\left[0,1\right], x_0<x_1<\ldots<x_n\right\}
\end{align*}
and we define
\begin{align*}
\mathsf{V}\left(\varphi\right)\coloneqq  \inf\left\{\mathsf{var}\left(\varphi'\right)\colon \varphi'\text{ is a version of }\varphi\right\}.
\end{align*}
By $BV$ we denote the Banach space of functions of bounded variation, i.e.\ of functions $\varphi$ 
fulfilling $\mathsf{V}\left(\varphi\right)<\infty$. It is equipped with the norm
$\left\|\varphi\right\|_{BV}\coloneqq\left|\varphi\right|_{\infty}+\mathsf{V}\left(\varphi\right)$.
\end{Def}
With this we can state our main example. 
\begin{Ex}\label{ex: interval maps}
Let $\Omega'\subset \left[0,1\right]$ be a dense and open set such that $\mu\left(\Omega'\right)=1$ and
let $\mathcal{I}\coloneqq\left(I_n\right)_{n\in\mathbb{N}}$ be a countable family
of closed intervals with disjoint interiors and for any $I_n$ such that the set $I_n\cap \left(\left[0,1\right]\backslash \Omega'\right)$ 
consists exactly of the endpoints of $I_n$.
Furthermore, we assume that $T$ fulfills the following properties:
\begin{itemize}
 \item (Adler's condition) $ T_n\coloneqq T\lvert_{\mathring{I}_n}\in \mathcal{C}^2$ and
 $ T''/\left( T'\right)^2$ is bounded on $\Omega'$.
 \item (Finite image condition) $\#\left\{ T I_n\colon I_n\in\mathcal{I}\right\}<\infty$.
 \item (Uniform expansion)
 There exists $m>1$ such that  $\left| T_n'\right|\geq m$ for all $n\in\mathbb{N}$.
 \item $ T$ is topologically mixing. 
\end{itemize}
Furthermore, we assume that $\chi$ is constant on the interior of each interval $I_n$
and there exists a constant $k >0$ such that 
for all $\ell\in\mathbb{R}_{\geq 0}$
\begin{align}
\mathsf{V}\left(\upperleft{\ell}{}{\chi}\right)\leq k\cdot {\ell}
\hspace{2cm}\text{ and }\hspace{2cm}
\mathsf{V}\left(\mathbbm{1}_{\left\{\chi>\ell\right\}}\right)\leq k.\label{eq: cond C 2}
\end{align}
Then there exists a probability measure $\mu$ absolutely continuous to the Lebesgue measure
such that $\left([0,1], \mathcal{B}, T, \mu, BV,\left\|\cdot\right\|_{BV}, \chi\right)$
fulfills Property $\mathfrak{D}$ and $\left(\chi\circ T^{n-1}\right)$ is a $\bm{\psi}$-mixing.

We note here that this example mainly relies on results in \cite{rychlik_bounded_1983} 
on piecewise expanding interval maps on countable partitions generalizing \cite{lasota_existence_1973} where  finite partitions are considered.
It follows from \cite{zweimueller_ergodic_1998} 
that $\left([0,1], \mathcal{B}, T, \mu, BV, \left\|\cdot\right\|_{BV}\right)$ fulfills Property $\mathfrak{C}$.
Furthermore, \eqref{eq: cond C 2} implies \eqref{C 1}   and thus $\left([0,1], \mathcal{B}, T, \mu, BV, \left\|\cdot\right\|_{BV}\right)$ 
fulfills Property $\mathfrak{D}$.
This was discussed in detail in \cite[Section 1.4]{kessebohmer_strong_2018}. 
Finally, that $\left(\chi\circ T^{n-1}\right)$ is $\bm{\psi}$-mixing follows by 
\cite[Theorem 1]{aaronson_mixing_2005}.
As an  explicit example one could choose the partition $I_n\coloneqq [1/(n+1), 1/n]$, $n\in \N$,
with any $T$ fulfilling the above properties with respect to this partition and 
$\chi=\sum _{n\in \N}n^{1/\alpha}\cdot\1_{I_{n}}$  and the underlying invariant measure calculated as in \cite{rychlik_bounded_1983}.
\end{Ex}

Our next theorem shows that indeed Property $\mathfrak{D}$ alone is not sufficient for our main theorem.
\begin{thm}\label{thm: counterex}
 Let $\Omega\coloneqq [0,1)$ and $T\coloneqq 2x\mod 1$ with $\mu$ the Lebesgue measure restricted to $[0,1)$. 
 Further define $\chi\colon [0,1)\to\mathbb{R}_{>0}$ by $\chi\left(x\right)=x^{-\gamma}$ with $\gamma>1$. 
 Then there exists a Banach space $\mathcal{F}$ with a norm $\left\|\cdot\right\|$ such that 
 $\left([0,1),\mathcal{B},T,\mu, \mathcal{F},\left\|\cdot\right\|,\chi\right)$ fulfills Property $\mathfrak{D}$.
 If on the other hand $(b_n)$ tends to infinity with $b_n=o(n)$,
 then 
\begin{align*}
 \int\mathsf{S}_n^{b_n}\chi\;\mathrm{d}\mu=\infty,
\end{align*}
for all $n\in\mathbb{N}$.
\end{thm}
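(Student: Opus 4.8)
The plan is to prove the two assertions separately: first exhibit a Banach space on which Property $\mathfrak{D}$ holds, then show the trimmed means are infinite by exploiting that orbits of the doubling map linger near $0$.

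\textbf{Part 1: realising Property $\mathfrak{D}$.} My plan is simply to take $\mathcal{F}=BV\coloneqq BV([0,1])$ with $\|\cdot\|\coloneqq\|\cdot\|_{BV}$. The doubling map is the piecewise expanding $C^2$ interval map associated with the partition $I_1=[0,1/2]$, $I_2=[1/2,1]$; it has vanishing distortion ($T''=0$), full and hence finite image, expansion constant $2$, and is topologically mixing, with Lebesgue measure $\mu$ as its (unique) absolutely continuous invariant measure. Hence, by the classical transfer operator theory for such maps --\,exactly as recalled in Example~\ref{ex: interval maps}, the relevant Lasota--Yorke inequality and spectral gap being available via \cite{zweimueller_ergodic_1998}, this part not using local constancy of $\chi$\,-- the tuple $([0,1],\mathcal{B},T,\mu,BV,\|\cdot\|_{BV})$ satisfies Property~$\mathfrak{C}$; one also checks the elementary facts that $BV$ with $\|\cdot\|_{BV}$ is a Banach algebra, contains the constants, and obeys $\|f\|_{BV}\ge|f|_\infty$. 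It then remains to verify \eqref{C 1} for $\chi(x)=x^{-\gamma}$. Since $\chi$ is strictly decreasing, for $\ell\ge1$ one has $\{\chi>\ell\}=[0,\ell^{-1/\gamma})$ and $\upperleft{\ell}{}{\chi}=x^{-\gamma}\cdot\mathbbm{1}_{[\ell^{-1/\gamma},1)}$, whereas for $0\le\ell<1$ one has $\{\chi>\ell\}=[0,1)$ and $\upperleft{\ell}{}{\chi}\equiv0$. A direct variation estimate then gives $\|\mathbbm{1}_{\{\chi>\ell\}}\|_{BV}\le2$ for every $\ell\ge0$, and, for $\ell\ge1$,
\begin{align*}
\|\upperleft{\ell}{}{\chi}\|_{BV}\;\le\;|\upperleft{\ell}{}{\chi}|_\infty+\mathsf{V}\bigl(x^{-\gamma}\cdot\mathbbm{1}_{[\ell^{-1/\gamma},1)}\bigr)\;\le\;\ell+\bigl(\ell+(\ell-1)\bigr)\;\le\;3\ell,
\end{align*}
where the jump at $\ell^{-1/\gamma}$ contributes $\ell$ and the monotone decrease of $x^{-\gamma}$ on $[\ell^{-1/\gamma},1)$ contributes $\ell-1$. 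Thus \eqref{C 1} holds with $K_1=3$, and Property~$\mathfrak{D}$ is satisfied.

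\textbf{Part 2: divergence of the trimmed mean.} The point I would exploit is that for the doubling map the forward orbit of a point near $0$ stays near $0$. Concretely, if $x\in[0,2^{-n})$ then $2^jx<1/2$ for every $0\le j\le n-1$, so $T^jx=2^jx$ with no modular reduction and $\chi(T^jx)=2^{-j\gamma}x^{-\gamma}$ is \emph{strictly decreasing} in $j$ on this interval. Hence among $\chi(x),\chi(Tx),\dots,\chi(T^{n-1}x)$ the $b_n$ largest are exactly those with indices $0,\dots,b_n-1$, so (assuming $b_n\le n-1$, which holds for all large $n$ since $b_n=o(n)$) the index $j=b_n$ survives the trimming and
\begin{align*}
\mathsf{S}_n^{b_n}\chi(x)\;\ge\;\chi(T^{b_n}x)\;=\;2^{-b_n\gamma}\,x^{-\gamma},\qquad x\in[0,2^{-n}).
\end{align*}
Integrating over $[0,2^{-n})\subseteq[0,1)$ and using $\gamma>1$,
\begin{align*}
\int\mathsf{S}_n^{b_n}\chi\;\mathrm{d}\mu\;\ge\;2^{-b_n\gamma}\int_0^{2^{-n}}x^{-\gamma}\,\mathrm{d}x\;=\;\infty,
\end{align*}
which is the assertion. (This argument in fact gives more: the conclusion persists for \emph{any} trimming numbers with $b_n\le n-1$; the hypothesis $b_n=o(n)$ plays no role here, and one may even delete all but one of the orbit values.)

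\textbf{On the difficulty.} There is no serious obstacle. The only care needed is in the elementary variation computations for $\upperleft{\ell}{}{\chi}$ and $\mathbbm{1}_{\{\chi>\ell\}}$ (including the observation that passing to the infimum over versions in the definition of $\mathsf{V}$ cannot reduce these below a constant multiple of $\ell$, respectively a constant), and in checking that on $[0,2^{-n})$ no $\bmod 1$ reduction occurs, so that the orbit values are genuinely monotone in the time index. The conceptual content is merely that $\chi=x^{-\gamma}$ with $\gamma>1$ is non-integrable and, in contrast to a genuinely ``spreading'' situation, the lingering of orbits near $0$ prevents $o(n)$-trimming --\,indeed almost full trimming\,-- from removing the non-integrable singularity at $0$.
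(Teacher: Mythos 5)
Your argument is essentially the same as the paper's: you take $\mathcal{F}=BV$, verify \eqref{C 1} by the same elementary variation computation, and then for the divergence you exploit that on a small interval near $0$ the doubling‐map orbit is monotone so that $\chi\circ T^{b_n}$ survives trimming and the resulting $\omega^{-\gamma}$ singularity makes the integral infinite. The only cosmetic difference is that the paper works on $[0,2^{-b_n})$ (using that $\chi\circ T^{b_n}(\omega)$ lies below the first $b_n$ orbit values and a Markov-type lower bound on the integral) whereas you restrict to $[0,2^{-n})$ and integrate the pointwise bound directly; both are valid, and your write-up in fact corrects some minor sign/exponent slips in the paper's displayed formulas $\left(\chi\circ T^{j}\right)\left(\omega\right)=\left(2^{j}\omega\right)^{-\gamma}$.
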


\begin{Rem}\label{rem: iid special case}
To the authors' knowledge this theorem is also a new result for the setting of i.i.d.\ random variables. 
Indeed the case of i.i.d.\ random variables follows as a special case from this setting. 
A proof of this fact  will be given in Section \ref{subsec: proof remark}.
\end{Rem}

\section{Proofs of main theorems}\label{sec: proof main thm}
The second part of our paper is devoted to the  proofs of the theorems;  
the proof of Theorem \ref{thm: conv in mean} will be given in Section \ref{subsec: Proof mean conv},
 the proof of Theorem \ref{thm: counterex} in Section \ref{subsec: proof thm counterex}, and
the proof of Remark \ref{rem: iid special case} in Section \ref{subsec: proof remark}.

\subsection{Proof of Theorem \ref{thm: conv in mean}}\label{subsec: Proof mean conv}
Theorem \ref{thm: conv in mean} is proven by proving the following two lemmas: 
\begin{lemma}\label{lem: mean assymp}
Let $\left(\Omega, \mathcal{A}, T , \mu,\mathcal{F},\left\|\cdot\right\|,\chi\right)$ fulfill Property $\mathfrak{D}$
and assume that $\mu\left(\chi>x\right)=L\left(x\right)/x^{\alpha}$, where $L$ is a slowly varying function and $0<\alpha<1$.
Further, let $\left(b_n\right)$ be a sequence of natural numbers tending to infinity such that $b_n=o\left(n\right)$.
We assume that at least one $\bm{\psi}$-mixing coefficient of the sequence of random variables $\left(\chi\circ T^{n-1}\right)_{n\in\mathbb{N}}$ is strictly less than one.
Then  
$\int{\mathsf{S}_n^{b_n}\chi}\sim d_n$
with $(d_n)$ as in \eqref{eq: def dn}. 
\end{lemma}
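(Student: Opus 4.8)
\emph{Strategy and layer--cake reduction.} The plan is to compare $\int\mathsf S_n^{b_n}\chi\,\rmd\mu$ (written $\mathbb E[\mathsf S_n^{b_n}\chi]$ below) with a purely deterministic integral evaluable by Karamata's theorem, the comparison error being controlled through fluctuation estimates for exceedance counts. For $t\ge 0$ put $N_n(t)\coloneqq\#\{0\le k<n:\chi\circ T^{k}>t\}=\mathsf S_n(\mathbbm 1_{\{\chi>t\}})$, the Birkhoff sum of $\mathbbm 1_{\{\chi>t\}}\in\mathcal F$, which by Property $\mathfrak D$ satisfies $\|\mathbbm 1_{\{\chi>t\}}\|\le K_1$ and $|\mathbbm 1_{\{\chi>t\}}|_\infty=1$. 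Since $\mathsf S_n^{b_n}\chi$ is obtained by discarding the $b_n$ largest summands, a level--by--level count gives the identity $\mathsf S_n^{b_n}\chi=\int_0^\infty(N_n(t)-b_n)^+\,\rmd t$, whence by Tonelli $\mathbb E[\mathsf S_n^{b_n}\chi]=\int_0^\infty\mathbb E[(N_n(t)-b_n)^+]\,\rmd t$. (If the trimming convention differs by finitely many extremal summands the correction is $O(t_n)$ with $t_n$ as below, hence absorbed into the error term.)

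\emph{The deterministic main term.} Put $\lambda_n(t)\coloneqq n\mu(\chi>t)=\mathbb E[N_n(t)]$ and let $t_n$ be the level with $\lambda_n(t_n)=b_n$ (using the generalized inverse of $t\mapsto\mu(\chi>t)$ if necessary; as $\mu(\chi>\cdot)$ is regularly varying its jumps are asymptotically negligible, so this is harmless). Since $x\mapsto(x-b_n)^+$ vanishes on $[0,b_n]$ and $\lambda_n$ is non--increasing, $\int_0^\infty(\lambda_n(t)-b_n)^+\,\rmd t=\int_0^{t_n}(\lambda_n(t)-b_n)\,\rmd t=n\,\mathbb E[\chi\mathbbm 1_{\{\chi\le t_n\}}]$, and Karamata's theorem for the truncated first moment of a tail of index $-\alpha\in(-1,0)$ yields $n\,\mathbb E[\chi\mathbbm 1_{\{\chi\le t_n\}}]\sim\frac{\alpha}{1-\alpha}\,t_n\cdot n\mu(\chi>t_n)=\frac{\alpha}{1-\alpha}\,b_n t_n$. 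Finally $t_n$ is the asymptotic inverse of $x\mapsto x^{\alpha}/L(x)$ at $n/b_n$, and the de Bruijn bookkeeping identifying $\frac{\alpha}{1-\alpha}b_n t_n$ with $d_n$ of \eqref{eq: def dn} is exactly the one already carried out in \cite{kessebohmer_strong_2018} (see also \cite[Appendix~5]{bingham_regular_1987}); hence $\int_0^\infty(\lambda_n(t)-b_n)^+\,\rmd t\sim d_n$.

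\emph{The error term.} It remains to show $\int_0^\infty\bigl|\mathbb E[(N_n(t)-b_n)^+]-(\lambda_n(t)-b_n)^+\bigr|\,\rmd t=o(d_n)$, and I split at the level $s_n$ with $\lambda_n(s_n)=b_n/2$. On $[0,s_n]$: since $x\mapsto(x-b_n)^+$ is $1$--Lipschitz, Jensen's inequality bounds the integrand by $\mathbb E|N_n(t)-\lambda_n(t)|\le\sqrt{\var N_n(t)}$; writing $\var N_n(t)=n\,p(1-p)+2\sum_{m=1}^{n-1}(n-m)c_m$ with $p=\mu(\chi>t)$ and $c_m$ the covariance of $\mathbbm 1_{\{\chi>t\}}$ with $\mathbbm 1_{\{\chi>t\}}\circ T^{m}$, the spectral gap of $\widehat T$ (Property $\mathfrak C$) gives $|c_m|\le C\beta^{m}$ for some $\beta<1$ while monotonicity of the $\bm\psi$--coefficients plus the hypothesis gives $|c_m|\le\psi_0\,p^{2}$ for $m\ge n_0$, where $\psi_0\coloneqq\bm\psi(n_0)<1$; using $|c_m|\le\sqrt{C\beta^{m}\psi_0}\,p$ and summing yields $\var N_n(t)\le C_0\,\lambda_n(t)$, so by Karamata $\int_0^{s_n}\sqrt{\var N_n(t)}\,\rmd t\lesssim\int_0^{s_n}\sqrt{\lambda_n(t)}\,\rmd t\asymp t_n\sqrt{b_n}=o(d_n)$ (as $d_n\asymp b_nt_n$). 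On $(s_n,\infty)$ the target term is $0$, and we must bound $\int_{s_n}^\infty\mathbb E[(N_n(t)-b_n)^+]\,\rmd t$: decomposing $\{0,\dots,n-1\}$ into residue classes modulo $n_0$ and using $(\sum_a x_a-\sum_a c_a)^+\le\sum_a(x_a-c_a)^+$ reduces this to $N_n^{(0)}(t)\coloneqq\sum_{k\equiv 0\,(n_0)}\mathbbm 1_{\{\chi\circ T^{k}>t\}}$, a sum of indicators whose indices are pairwise at distance $\ge n_0$; peeling off factors with $\mathbb P(C\mid B)\le(1+\psi_0)\mathbb P(C)$ for $B\in\mathcal A_{-\infty}^{k}$, $C\in\mathcal A_{k+n_0}^{\infty}$ gives the Poissonian bound $\mathbb E[e^{sN_n^{(0)}(t)}]\le\exp\bigl((e^{s}-1)\nu(t)\bigr)$ with $\nu(t)=(1+\psi_0)\lceil n/n_0\rceil\,\mu(\chi>t)$, whence $\mathbb E[(N_n^{(0)}(t)-c)^+]\le e^{-\nu(t)}(e\nu(t)/c)^{c}/\log(c/\nu(t))$ whenever $c>\nu(t)$. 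Taking $c=\lfloor b_n/n_0\rfloor$, sampling along the dyadic levels $s_n^{(l)}$ defined by $\mu(\chi>s_n^{(l)})=2^{-l-1}b_n/n$ (so $s_n^{(0)}=s_n$), and using $\int_{s_n}^\infty\mathbb E[(N_n(t)-b_n)^+]\,\rmd t\le\sum_{l\ge 0}s_n^{(l+1)}\,\mathbb E[(N_n(s_n^{(l)})-b_n)^+]$, one checks $c/\nu(s_n^{(l)})\asymp 2^{l+1}/(1+\psi_0)>1$ for every $l\ge 0$ — this is precisely where $\psi_0<1$ is used — so the $l$--th summand decays like $\rho_0^{\,b_n/n_0}$ for some $\rho_0=\rho_0(\psi_0)<1$; Potter's bounds control $s_n^{(l+1)}/s_n$ by $2^{(l+1)(1/\alpha+\varepsilon)}$, making the series geometric in $l$ and $\lesssim n_0 s_n\rho_0^{\,b_n/n_0}=o(s_n)=o(d_n)$. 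Combining the three parts, $\mathbb E[\mathsf S_n^{b_n}\chi]=\int_0^\infty(\lambda_n(t)-b_n)^+\,\rmd t+o(d_n)\sim d_n$.

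\emph{Main obstacle.} The delicate point is the far tail $t>s_n$: exceedances lying in one window of length $<n_0$ are not governed by any single $\bm\psi$--coefficient, which forces the residue--class decomposition, and a raw Chernoff bound for $N_n(t)-b_n$ inserted into $\int_{s_n}^\infty\!\cdots\,\rmd t$ loses a factor of order $e^{\Theta(b_n)}$, which is why the bound must be sampled along the dyadic levels and summed rather than integrated directly. Complementarily, the spectral--gap decay of correlations is what pins $\var N_n(t)$ at order $\lambda_n(t)$ on $[0,s_n]$: a bare $\bm\psi$--estimate would only give order $\lambda_n(t)^2$, costing a fixed positive multiple of $d_n$ and wrecking the asymptotic.
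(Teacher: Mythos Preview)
Your argument is correct and follows a genuinely different route from the paper. The paper sandwiches $\int\mathsf S_n^{b_n}\chi\,\rmd\mu$ between multiples of $\int\mathsf T_n^{g_n}\chi\,\rmd\mu$ for a carefully chosen truncation level $g_n$: the upper bound is obtained by a dyadic decomposition in the \emph{value} of $\mathsf S_n^{b_n}\chi$ relative to the truncated sums $\mathsf T_n^{2^k g_n}\chi$, and the key decoupling step (their (B1)) excises a window of width $r$ around each index and applies the $\bm\psi$-bound twice to separate $\upperleft{2^k g_n}{}{\chi}\circ T^{i-1}$ from the exceedance event; the resulting probabilities $\mu(\Phi_{k,n}')$ are then controlled for small $k$ by the spectral-gap exponential inequality (Lemma~\ref{lemma: Tnfn chi deviation allg}) and for large $k$ by iterated $\bm\psi$-peeling, the two regimes being spliced at $k\approx b_n^{1/2}$. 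Your layer-cake reduction to $\int_0^\infty\mathbb E[(N_n(t)-b_n)^+]\,\rmd t$ trades this event-by-event decoupling for a level-by-level analysis of the exceedance count $N_n(t)$: a second-moment bound handles the bulk $[0,s_n]$ and a Chernoff/MGF bound along residue classes modulo $n_0$ handles the tail $(s_n,\infty)$. Your approach isolates the role of $\bm\psi(n_0)<1$ very transparently (it is exactly the condition $c/\nu(s_n)>1$ that activates the Chernoff bound), and avoids the somewhat intricate partition combinatorics of the paper's step (B1); on the other hand the paper's route stays closer to the almost-sure machinery of \cite{kessebohmer_strong_2018} and re-uses Lemma~\ref{lemma: Tnfn chi deviation allg} directly. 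One small remark: the spectral-gap correlation decay already gives $|c_m|\le C\beta^m\|\mathbbm 1_{\{\chi>t\}}\|\,|\mathbbm 1_{\{\chi>t\}}|_1\le CK_1\beta^m p$, so the variance bound $\var N_n(t)\le C_0\lambda_n(t)$ follows without invoking $\bm\psi$ at all, and your geometric-mean detour through $\sqrt{C\beta^m\psi_0}\,p$ is unnecessary (though not wrong).
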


\begin{lemma}\label{lem: conv prob}
Let $\left(\Omega, \mathcal{A}, T , \mu,\mathcal{F},\left\|\cdot\right\|,\chi\right)$ fulfill Property $\mathfrak{D}$
and assume that $\mu\left(\chi>x\right)=L\left(x\right)/x^{\alpha}$, where $L$ is a slowly varying function and $0<\alpha<1$.
Further, let $\left(b_n\right)$ be a sequence of natural numbers tending to infinity such that $b_n=o\left(n\right)$.
Then  with $(d_n)$ given in \eqref{eq: def dn}, we have the following convergence in probability:
\begin{align*}
\lim_{n\to\infty}\mu\left(\left|\frac{\mathsf{S}_n^{b_n}\chi}{d_n}-1\right|>\epsilon\right)=0,\;\;\mbox{ for all } \epsilon >0. 
\end{align*} 
\end{lemma}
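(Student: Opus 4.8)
The plan is to squeeze $\mathsf{S}_n^{b_n}\chi$ between two truncated Birkhoff sums and to control those sums, together with the relevant exceedance counts, by the spectral-gap exponential inequality of \cite{kessebohmer_strong_2018}. This is the ``convergence in probability'' counterpart of the almost sure argument of that paper; since we never pass to a subsequence we need neither a lower growth bound on $(b_n)$ nor the $\bm{\psi}$-mixing hypothesis (hence its absence from the statement). Fix $\delta\in(0,1)$. As $x\mapsto\mu(\chi>x)$ is regularly varying of index $-\alpha$, for all large $n$ we may choose levels $\ell_n^{(1)}\ge\ell_n^{(2)}$ with
\begin{align*}
n\,\mu\bigl(\chi>\ell_n^{(1)}\bigr)=(1-\delta)\,b_n,\qquad n\,\mu\bigl(\chi>\ell_n^{(2)}\bigr)=(1+\delta)\,b_n.
\end{align*}
Inverting the tail asymptotically --\,the de Bruijn-conjugate computation already underlying \eqref{eq: def dn}\,-- yields $\ell_n^{(i)}\sim(1\mp\delta)^{-1/\alpha}(n/b_n)^{1/\alpha}(L^{-1/\alpha})^{\#}((n/b_n)^{1/\alpha})$, and Karamata's theorem \cite{bingham_regular_1987} gives
\begin{align*}
n\int\upperleft{\ell_n^{(i)}}{}{\chi}\,\mathrm{d}\mu&\ \sim\ \frac{\alpha}{1-\alpha}\,\ell_n^{(i)}\cdot n\,\mu\bigl(\chi>\ell_n^{(i)}\bigr)\ \sim\ (1\mp\delta)^{\,1-1/\alpha}\,d_n,\\
n\int\bigl(\upperleft{\ell_n^{(i)}}{}{\chi}\bigr)^2\,\mathrm{d}\mu&\ \asymp\ \bigl(\ell_n^{(i)}\bigr)^2 b_n .
\end{align*}
Since $d_n\asymp\ell_n^{(i)}b_n$ and $b_n\to\infty$, the second-moment term is $o(d_n^2)$; this is precisely where the divergence of $(b_n)$ is used.

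Next I would record the sandwich. With $N_n(\ell)\coloneqq\#\{1\le k\le n:\chi\circ T^{k-1}>\ell\}=\mathsf{S}_n(\mathbbm{1}_{\{\chi>\ell\}})$, put
\begin{align*}
A_n\coloneqq\bigl\{N_n(\ell_n^{(1)})<b_n\bigr\}\cap\bigl\{N_n(\ell_n^{(2)})>b_n\bigr\}.
\end{align*}
A direct sorting argument shows that on $A_n$,
\begin{align*}
\mathsf{S}_n(\upperleft{\ell_n^{(2)}}{}{\chi})\ \le\ \mathsf{S}_n^{b_n}\chi\ \le\ \mathsf{S}_n(\upperleft{\ell_n^{(1)}}{}{\chi}):
\end{align*}
when fewer than $b_n$ summands exceed $\ell_n^{(1)}$, deleting the $b_n$ largest deletes all of them, so the trimmed sum is dominated by the sum of all summands at most $\ell_n^{(1)}$; when more than $b_n$ exceed $\ell_n^{(2)}$, every deleted summand exceeds $\ell_n^{(2)}$ and the trimmed sum dominates the sum of all summands at most $\ell_n^{(2)}$. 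By Property $\mathfrak{D}$ we have $\mathbbm{1}_{\{\chi>\ell\}}\in\mathcal{F}$ with $\|\mathbbm{1}_{\{\chi>\ell\}}\|\le K_1$ uniformly in $\ell$, so the exponential inequality applied to $N_n(\ell_n^{(i)})$, whose mean $(1\mp\delta)b_n$ diverges, gives $\mu(A_n^c)\to0$.

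Finally I would apply the exponential inequality to $\upperleft{\ell_n^{(i)}}{}{\chi}\in\mathcal{F}$, which by Property $\mathfrak{D}$ satisfies $\|\upperleft{\ell_n^{(i)}}{}{\chi}\|\le K_1\,\ell_n^{(i)}$, together with the second-moment estimate above, to conclude that for every $\eta>0$
\begin{align*}
\mu\Bigl(\Bigl|\mathsf{S}_n(\upperleft{\ell_n^{(i)}}{}{\chi})-n\int\upperleft{\ell_n^{(i)}}{}{\chi}\,\mathrm{d}\mu\Bigr|>\eta\,d_n\Bigr)\ \longrightarrow\ 0 .
\end{align*}
Combining this with $\mu(A_n^c)\to0$, the sandwich, and $n\int\upperleft{\ell_n^{(i)}}{}{\chi}\,\mathrm{d}\mu\sim(1\mp\delta)^{1-1/\alpha}d_n$, and letting first $n\to\infty$, then $\eta\downarrow0$, then $\delta\downarrow0$ (so that $(1\pm\delta)^{1-1/\alpha}\to1$), proves $\mathsf{S}_n^{b_n}\chi/d_n\to1$ in probability. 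I expect the main obstacle to be this last concentration step: one must invoke the exponential inequality of \cite{kessebohmer_strong_2018} with truncation levels $\ell_n^{(i)}$ that grow with $n$, so its constants must be uniform in (or grow at most polynomially with) the truncation level, and the variance-type term in it has to be balanced against the regularly varying moment estimates so that the deviation probabilities genuinely vanish; that balancing --\,together with the companion estimate $n\int(\upperleft{\ell_n^{(i)}}{}{\chi})^2\,\mathrm{d}\mu=o(d_n^2)$\,-- is where the real computation sits, rather than in any conceptual difficulty.
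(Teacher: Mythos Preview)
Your proposal is correct and follows essentially the same strategy as the paper: sandwich $\mathsf{S}_n^{b_n}\chi$ between truncated Birkhoff sums at levels near $F^{\leftarrow}(1-b_n/n)$, use the spectral-gap exponential inequality (Lemma~\ref{lemma: Tnfn chi deviation allg}) to control both the exceedance counts $N_n(\ell_n^{(i)})$ and the centered truncated sums, and finally let $\delta\downarrow0$. The only cosmetic discrepancy is that the inequality of Lemma~\ref{lemma: Tnfn chi deviation allg} involves $|\varphi|_1$, not a variance, so your second-moment estimate $n\int(\upperleft{\ell_n^{(i)}}{}{\chi})^2\,\mathrm{d}\mu\asymp(\ell_n^{(i)})^2 b_n$ is unnecessary---the simpler bound $n\,\bigl|\upperleft{\ell_n^{(i)}}{}{\overline{\chi}}\bigr|_1\le 2n\int\upperleft{\ell_n^{(i)}}{}{\chi}\,\mathrm{d}\mu\asymp d_n$ already makes the exponent of order $-\eta^2 b_n\to-\infty$.
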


Using Pratt's theorem  \cite{MR0123673} in combination with Lemma \ref{lem: mean assymp} and \ref{lem: conv prob} immediately gives the statement of Theorem \ref{thm: conv in mean}.

In preparation of  the proof of these lemmas,
for $\chi:\Omega\to\mathbb{R}_{\geq 0}$ 
and a real valued sequence $\left(f_{n}\right)_{n\in\mathbb{N}}$
we recall the definition of the truncated function 
\begin{align*}
 \upperleft{f_n}{}{\chi}\coloneqq \chi\cdot\mathbbm{1}_{\left\{\chi\leq f_n\right\}}
\end{align*}
 given in Definition \ref{def: Prop D} 
and define the associated truncated sum 
\begin{align*}
 \mathsf{T}_n^{f_n}\chi\coloneqq \sum_{k=1}^n\upperleft{f_n}{}{\chi}\circ T^{k-1}.
\end{align*}
If $f_n$ tends to infinity, we have that 
\begin{align}
 \int\mathsf{T}_n^{f_n}\chi\;\mathrm{d}\mu
 &\sim n\cdot\frac{\alpha}{1-\alpha}\cdot L\left(f_n\right)\cdot f_n^{1-\alpha},\label{E S*}
\end{align}
see \cite[Lemma 3.18]{kessebohmer_strong_2018} for a detailed calculation.

\begin{proof}[Proof of Lemma \ref{lem: mean assymp}]
We recall that $F$ is the distribution function of $\chi$ with respect to $\mu$, i.e.\ 
$F(x)=\mu\left(\chi\leq x\right)=1-L\left(x\right)/x^{\alpha}$.
Let $\left(\zeta_n\right)$ be defined as $\zeta_n\coloneqq b_n^{2/3}$
and set 
\begin{align}
 g_n\coloneqq F^{\leftarrow}\left(1-\frac{b_n-\zeta_n}{n}\right).\label{eq: un}
\end{align}

We will split the proof of the  lemma into the following parts:
\begin{enumerate}[label=(\Alph*)]
\item\label{en: C}
We have that
\begin{align}
\int\mathsf{T}_{n}^{g_{n}}\chi\;\mathrm{d}\mu\sim\frac{\alpha}{1-\alpha}\cdot n^{1/\alpha}\cdot b_{n}^{1-1/\alpha}\cdot \left(L^{1/\alpha}\right)^{\#}\left(\left(\frac{n}{b_{n}}\right)^{1/\alpha}\right).\label{eq: asymptotic in A}
\end{align}
\item\label{en: A} For all $\epsilon>0$
there exists $N\in\mathbb{N}$ such that for all $n\geq N$ 
\begin{align*}
\int \mathsf{S}_{n}^{b_{n}}\chi\;\mathrm{d}\mu\leq\left(1+\epsilon\right)\int\mathsf{T}_{n}^{g_n}\chi\;\mathrm{d}\mu.\end{align*}
\item\label{en: B}
For all $\epsilon>0$ there exists
$N\in\mathbb{N}$ such that for all $n\geq N$ 
\begin{align*}
\int \mathsf{S}_{n}^{b_{n}}\chi\;\mathrm{d}\mu\geq\left(1-\epsilon\right)\int\mathsf{T}_{n}^{g_n}\chi\;\mathrm{d}\mu. \end{align*}
\end{enumerate}

\emph{Proof of \ref{en: C}}:
The proof of a statement similar to \ref{en: C} can be found  at the end of the proof of Theorem 1.7 in \cite{kessebohmer_strong_2018}. 
Indeed, there it is shown that
$\int\mathsf{T}_{n}^{f_{n}}\chi\;\mathrm{d}\mu$ is asymptotic to the right hand side of \eqref{eq: asymptotic in A}.
 The sequence $(f_n)$ does not necessarily coincide with $(g_n)$.
However, the sequences can be written as 
$f_n= F^{\leftarrow}\left(1-v_n/n\right)$
and 
$g_n= F^{\leftarrow}\left(1-w_n/n\right)$
with $v_n\sim u_n\sim b_n$.
The proof in \cite{kessebohmer_strong_2018} essentially proves that 
$\int\mathsf{T}_{n}^{f_{n}}\chi\;\mathrm{d}\mu\sim \alpha/(1-\alpha)\cdot n^{1/\alpha}\cdot v_{n}^{1-1/\alpha}\cdot \left(L^{-1/\alpha}\right)^{\#}\big(\left(n/v_{n}\right)^{1/\alpha}\big)$
allowing us to   conclude from $v_n\sim b_n$ the asymptotic in \eqref{eq: asymptotic in A}. The sequence $(g_n)$ can be treated analogously.
\medskip

\emph{Proof of \ref{en: A}}:
In order to prove \ref{en: A} we set for $k,n\in\mathbb{N}$, 
\begin{align}
\Gamma_{n} & \coloneqq\left\{ \mathsf{S}_{n}^{b_{n}}\chi\leq \mathsf{T}_{n}^{g_n}\chi\right\} ,\notag\\
\Delta_{k,n} & \coloneqq\left\{ \mathsf{T}_{n}^{2^{k-1}\cdot g_n}\chi<\mathsf{S}_{n}^{b_{n}}\chi\leq \mathsf{T}_{n}^{2^{k}\cdot g_n}\chi\right\} ,\notag\\
\Phi_{k,n} & \coloneqq\left\{ \mathsf{T}_{n}^{2^{k-1}\cdot g_n}\chi<\mathsf{S}_{n}^{b_{n}}\chi\right\} .\label{eq: def Phi kn}
\end{align}
Clearly,
\begin{align}
\int \mathsf{S}_{n}^{b_{n}}\chi\;\mathrm{d}\mu & \leq\int \mathsf{T}_{n}^{g_n}\chi\cdot\mathbbm{1}_{\Gamma_{n}}\;\mathrm{d}\mu+\sum_{k=1}^{\infty}\int \mathsf{T}_{n}^{2^{k}\cdot g_n}\chi\cdot\mathbbm{1}_{\Delta_{k,n}}\;\mathrm{d}\mu\notag\\
  &\leq\int \mathsf{T}_{n}^{g_n}\chi\;\mathrm{d}\mu+\sum_{k=1}^{\infty}\int \mathsf{T}_{n}^{2^{k}\cdot g_n}\chi\cdot\mathbbm{1}_{\Phi_{k,n}}\;\mathrm{d}\mu.\label{int Tnfn}
\end{align}

We will show in the following that $\sum_{k=1}^{\infty}\int \mathsf{T}_{n}^{2^{k}\cdot g_n}\chi\cdot\mathbbm{1}_{\Phi_{k,n}}\;\mathrm{d}\mu$
is negligible compared to $\int \mathsf{T}_{n}^{g_n}\chi\;\mathrm{d}\mu$.
First we define $r\coloneqq \min\left\{n\in\mathbb{N}\colon \bm{\psi}\left(n\right)<1\right\}$ and
\begin{align*}
\Phi_{k,n}'\coloneqq\left\{\# \left\{j\leq n\colon \chi\circ T^{j-1} >2^{k-1}\cdot g_n\right\}> b_n-2r+1\right\}.
\end{align*}
Since $\Phi_{k,n}= \left\{\# \left\{j\leq n\colon \chi\circ T^{j-1} >2^{k-1}\cdot g_n\right\}> b_n\right\}$,
we have that $\Phi_{k,n}\subset \Phi_{k,n}'$.
We will show \ref{en: A} by proving the following three statements:
\begin{enumerate}[label=(B\arabic*)]
 \item\label{en: A1}
 We have for all $k\geq 1$ and $n$ sufficiently large uniformly in $k$ that
 \begin{align*}
  \int \mathsf{T}_{n}^{2^{k}\cdot g_n}\chi\cdot\mathbbm{1}_{\Phi_{k,n}}\;\mathrm{d}\mu
  \leq \frac{\left(1+\bm{\psi}\left(r\right)\right)^2}{1-{\bm \psi}\left(r\right)}\cdot\int\mathsf{T}_{n}^{2^k\cdot g_n}\chi\;\mathrm{d}\mu
  \cdot \mu\left(\Phi_{k,n}'\right). 
   \end{align*}
 \item\label{en: A2}
 We have
 \begin{align*} 
\sum_{k=2}^{\infty}\int \mathsf{T}_{n}^{2^{k}\cdot g_n}\chi\;\mathrm{d}\mu\cdot\mu\left(\Phi_{k,n}'\right) &=o\left(\int \mathsf{T}_{n}^{g_n}\chi\;\mathrm{d}\mu\right).
\end{align*}
 \item\label{en: A3} We have
 \begin{align*}
  \int \mathsf{T}_{n}^{2\cdot g_n}\chi\;\mathrm{d}\mu\cdot\mu\left(\Phi_{1,n}'\right)=o\left(\int \mathsf{T}_{n}^{g_n}\chi\;\mathrm{d}\mu\right).
 \end{align*}
\end{enumerate}
Combining these statements with \eqref{int Tnfn} proves \ref{en: A}.
In \ref{en: A1} we have used a short notation which we will also use in the sequel. 
If we write that a statement $A_{k,n}$ depending on $n$ and $k$ holds for $n$ sufficiently large uniformly in $k$
we mean that there exists $N\in\mathbb{N}$ such that the statement $A_{n,k}$ holds 
for all $n\geq N$ and all $k$.

\emph{Proof of \ref{en: A1}}:
We will start this section with a set of definitions explaining in the sequel the strategy of the proof.
Let 
\begin{align*}
 E_{k,n,m,1}\coloneqq\{\chi\circ T^{m-1}\leq 2^{k-1}\cdot g_n\}
 \text{ and }E_{k,n,m,2}\coloneqq\{\chi\circ T^{m-1}> 2^{k-1}\cdot g_n\}
 \text{ and }E_{k,n,m,3}\coloneqq\Omega.
\end{align*}
Further, for $J\in\{1,2,3\}^n$ let
$D_{k,n,J}\coloneqq\bigcap_{J=(j_m)_{1\leq m\leq n}}E_{k,n,m,j_m}$.
Loosely speaking our set $D_{k,n,J}$ determines for each $m\leq n$ if $\chi\circ T^{m-1}>2^{k-1}\cdot g_n$ or $\chi\circ T^{m-1}\leq 2^{k-1}\cdot g_n$ holds,
or if no information about $\chi\circ T^{m-1}$ is gained.

 Remember that $r= \min\left\{n\in\mathbb{N}\colon \bm{\psi}\left(n\right)<1\right\}$.
Further, let $\Gamma_{n,i}\coloneqq \left\{m\in\mathbb{N}_{\leq n}\colon \left|m-i\right|\geq r\right\}$ and
let 
\begin{align*}
 \mathcal{J}_{k,n}^{i}&\coloneqq\left\{J=(j_m)_{1\leq m\leq n}\in\left\{1,2,3\right\}^n\colon j_m\neq 3\text{ for }m\in \Gamma_{n,i}\text{ and }j_m=3\text{ for }m\in (\Gamma_{n,i})^c\right.\\
 &\qquad\left.\text{ and }\#\left\{m\in \Gamma_{n,i}\colon \chi\circ T^{m-1}>2^{k-1}\cdot g_n\right\}>b_n-2r+1\right\}.
\end{align*}
 
 Here and in the following we denote by $A^c$ the complement of a set $A$.
Since $\#\Gamma_{n,i}^c \leq 2r-1$, the definition of $\mathcal{J}_{k,n}^{i}$ implies for each $k,n\in\mathbb{N}$ and $i\in\mathbb{N}_{\leq n}$ that
\begin{align}
 \Phi_{k,n}\subset\biguplus_{J\in \mathcal{J}_{k,n}^i}D_{k,n,J}\subset \Phi_{k,n}'.\label{eq: Un bn1}
\end{align}
 Thus, we have for each $i\in\mathbb{N}_{\leq n}$ and $k,n\in\mathbb{N}$
\begin{align}
 \int \mathsf{T}_{n}^{2^{k}\cdot g_n}\chi\cdot\mathbbm{1}_{\Phi_{k,n}}\;\mathrm{d}\mu 
 &=\sum_{i=1}^{n}\int \upperleft{2^{k}\cdot g_n}{}{\chi}\circ T^{i-1}\cdot\mathbbm{1}_{\Phi_{k,n}}\;\mathrm{d}\mu
  \leq\sum_{i=1}^{n}\sum_{J\in \mathcal{J}_{k,n}^{i}}\int \upperleft{2^{k}\cdot g_n}{}{\chi}\circ T^{i-1}\cdot\mathbbm{1}_{D_{k,n,J}}\;\mathrm{d}\mu.\label{Un bn2}
\end{align}
 For each $k,n\in\mathbb{N}$, $i\in\mathbb{N}_{\leq n}$,
and $J\in\mathcal{J}^{i}_{k,n}$
we can write $D_{k,n,J}$ as an intersection of two events 
$D_{k,n,J}^{i,<}\cap  D_{k,n,J}^{i,>}$,
where
\begin{align*}
 D_{k,n,J}^{i,<}&\coloneqq \bigcap_{(j_m)=J, m=1,\ldots, i-r}E_{k,n,m,j_m}
 \,\,\,\,\,\,\,\,\,\text{ and }\,\,\,\,\,\,\,\,\,
 D_{k,n,J}^{i,>}\coloneqq \bigcap_{(j_m)=J, m=i+r,\ldots, n}E_{k,n,m,j_m}.
\end{align*}
If $i-r<1$, then we set $D_{k,n,J}^{i,<}\coloneqq \Omega$ and if $i+r>n$, then we set $D_{k,n,J}^{i,>}\coloneqq \Omega$.
Loosely speaking, $D_{k,n,J}^{i,<}$ is determined by
the information of the first $i-r$ entries and 
$D_{k,n,J}^{i,>}$ of the last entries starting from the $(i+r)$-th.
We notice that for $J\in\mathcal{J}_{n,k}^{i}$
\begin{align*}
 \int \upperleft{2^{k}\cdot g_n}{}{\chi}\circ T^{i-1}\cdot\mathbbm{1}_{D_{k,n,J}}\;\mathrm{d}\mu
 &= \int \mathbbm{1}_{D_{k,n,J}^{i,<}}\cdot \upperleft{2^{k}\cdot g_n}{}{\chi}\circ T^{i-1}\cdot\mathbbm{1}_{D_{k,n,J}^{i,>}}\;\mathrm{d}\mu.
\end{align*}
To estimate this term we will use the $r$th $\bm{\psi}$-mixing coefficient
for which by assumption we have that $\bm{\psi}(r)<1$.
To make use of the $\bm{\psi}$-mixing property we notice that for any random variables $X,Y$
\begin{align}
\mathrm{Cov}(X,Y)\leq \bm{\psi}\left(\sigma(X),\sigma(Y)\right)\cdot \left|X\right|_1\cdot \left|Y\right|_1,\label{eq: cov doukhan}
\end{align}
where we denote by $\mathrm{Cov}$  the covariance,
see e.g.\  \cite[Theorem 3, Chapter 1.2.2]{doukhan_mixing:_1994}.
  This implies for non-negative random variables 
\begin{align}
 \int X\cdot Y\;\mathrm{d}\mu\leq(1+\bm{\psi}\left(\sigma(X),\sigma(Y)\right))\cdot \int X\;\mathrm{d}\mu\cdot \int Y\;\mathrm{d}\mu.\label{eq: int XY estim psi}
\end{align}

The following statements will all hold for all $i\in\N_{\leq N}$, $k,n\in\N$ and $J\in\mathcal{J}_{k,n}^i$. 
For brevity we will not mention that for each of the following calculations.
By noticing that 
\begin{align*}
 \bm{\psi}\left(\sigma\left(\mathbbm{1}_{D_{k,n,J}^{i,<}}\right),\sigma\left(\upperleft{2^{k}\cdot g_n}{}{\chi}\circ T^{i-1}\cdot \mathbbm{1}_{D_{k,n,J}^{i,>}}\right)\right)&\leq \bm{\psi}(r)\text{ and }\\
 \bm{\psi}\left(\sigma\left(\upperleft{2^{k}\cdot g_n}{}{\chi}\circ T^{i-1}\right),\sigma\left(\mathbbm{1}_{D_{k,n,J}^{i,>}}\right)\right)&\leq \bm{\psi}(r)
\end{align*}
we obtain
\begin{align*}
 \MoveEqLeft\int \upperleft{2^{k}\cdot g_n}{}{\chi}\circ T^{i-1}\cdot\mathbbm{1}_{D_{k,n,J}}\;\mathrm{d}\mu\notag\\
 &= \int \mathbbm{1}_{D_{k,n,J}^{i,<}}\cdot \upperleft{2^{k}\cdot g_n}{}{\chi}\circ T^{i-1}\cdot\mathbbm{1}_{D_{k,n,J}^{i,>}}\;\mathrm{d}\mu\notag\\
 &\leq \left(1+\bm{\psi}\left(r\right)\right)\int \mathbbm{1}_{D_{k,n,J}^{i,<}}\;\mathrm{d}\mu \cdot \int\upperleft{2^{k}\cdot g_n}{}{\chi}\circ T^{i-1}\cdot\mathbbm{1}_{D_{k,n,J}^{i,>}}\;\mathrm{d}\mu\notag\\
 &\leq \left(1+\bm{\psi}\left(r\right)\right)\int \mathbbm{1}_{D_{k,n,J}^{i,<}}\;\mathrm{d}\mu \cdot \left(1+\bm{\psi}\left(r\right)\right) \int\upperleft{2^{k}\cdot g_n}{}{\chi}\circ T^{i-1}\;\mathrm{d}\mu\cdot\int\mathbbm{1}_{D_{k,n,J}^{i,>}}\;\mathrm{d}\mu\notag\\
 &= \left(1+\bm{\psi}\left(r\right)\right)^2\cdot \mu\left(D_{k,n,J}^{i,<}\right)\cdot \mu\left(D_{k,n,J}^{i,>}\right)\cdot\int\upperleft{2^{k}\cdot g_n}{}{\chi}\circ T^{i-1}\;\mathrm{d}\mu. 
 \end{align*}
Using \eqref{eq: cov doukhan} in the other direction gives for non-negative random variables
\begin{align*}
 \int X\mathrm{d}\mu\cdot \int X\mathrm{d}\mu\leq \frac{\int X\cdot Y\mathrm{d}\mu}{1-{\bm \psi}\left(\sigma\left(X\right),\sigma\left(Y\right)\right)}
\end{align*}
yielding
\begin{align*}
 \mu\left(D_{k,n,J}^{i,<}\right)\cdot \mu\left(D_{k,n,J}^{i,>}\right)
 &\leq \frac{\mu\left(D_{k,n,J}\right)}{1-{\bm \psi}\left(r\right)}
\end{align*}
and thus
\begin{align}
\int \upperleft{2^{k}\cdot g_n}{}{\chi}\circ T^{i-1}\cdot\mathbbm{1}_{D_{k,n,J}}\;\mathrm{d}\mu
 &\leq \frac{\left(1+\bm{\psi}\left(r\right)\right)^2}{1-{\bm \psi}\left(r\right)}\cdot \mu\left(D_{k,n,J}\right)\cdot \int\upperleft{2^{k}\cdot g_n}{}{\chi}\circ T^{i-1}\;\mathrm{d}\mu.\label{eq: int chi 1D < psi int chi int 1D}
\end{align}
Combining this with \eqref{eq: Un bn1} and \eqref{Un bn2} implies \ref{en: A1}.

\emph{Proof of \ref{en: A2}}:
We will first estimate $\int \mathsf{T}_{n}^{2^{k}\cdot g_n}\chi\;\mathrm{d}\mu$ using 
\eqref{E S*} and applying Potter's bound, see e.g.\ \cite[Theorem 1.5.6]{bingham_regular_1987}, which gives
\begin{align}
 \int \mathsf{T}_{n}^{2^{k}\cdot g_n}\chi\;\mathrm{d}\mu
 &\leq n\cdot2\cdot\frac{\alpha}{1-\alpha}\cdot L\left(2^k\cdot g_n\right)\cdot \left(2^k\cdot g_n\right)^{1-\alpha}\notag\\
 &\leq n\cdot2^{k+1}\cdot \frac{\alpha}{1-\alpha}\cdot L\left(g_n\right)\cdot g_n^{1-\alpha}\notag\\
 & \leq 2^{k+1}\cdot\int\mathsf{T}_{n}^{g_n}\chi\;\mathrm{d}\mu,\label{eq: Tn 2k Potter}
\end{align}
for $n$ sufficiently large uniformly in $k$.

Next, we will estimate $\mu\left(\Phi_{k,n}'\right)$.
We will use 
two different techniques, one for rather small and the other for larger $k$.
We start with the estimate for the smaller $k$.
To ease notation we set  $\overline{\mathbbm{1}}_A\coloneqq \mathbbm{1}_A-\mu\left(A\right)$, 
for any measurable set $A$.
We notice that for all  $k,n\in\N$,
\begin{align}
 \mu\left(\Phi_{k,n}'\right)
&=\mu\left(\sum_{i=1}^n\mathbbm{1}_{\left\{\chi\circ T^{i-1}>2^{k-1}\cdot g_n\right\}}> b_n-2r+1\right)\notag\\
&=\mu\left(\sum_{i=1}^n\overline{\mathbbm{1}}_{\left\{\chi\circ T^{i-1}>2^{k-1}\cdot g_n\right\}}> b_n-2r+1-n\cdot\mu\left(\chi>2^{k-1}\cdot g_n\right)\right)\notag\\
&\leq \mu\left(\left|\sum_{i=1}^n\overline{\mathbbm{1}}_{\left\{\chi\circ T^{i-1}>2^{k-1}\cdot g_n\right\}}\right|> b_n-2r+1-n\cdot\mu\left(\chi>2^{k-1}\cdot g_n\right)\right).\label{eq: Phi kn 1st estimate}
\end{align}
Furthermore, applying
the definition of the distribution function and  
Potter's bound implies 
\begin{align}
 n\cdot\mu\left(\chi>2^{k-1}\cdot g_n\right)
 = n\cdot\frac{L\left(2^{k-1}\cdot g_n\right)}{\left(2^{k-1}\cdot g_n\right)^{\alpha}}
 \leq n\cdot\frac{L\left(g_n\right)}{2^{\left(k-1\right)\cdot \alpha/2}\cdot g_n^{\alpha}}
 =\frac{n\cdot \left(1-F\left(g_n\right)\right)}{2^{\left(k-1\right)\cdot \alpha/2}},\label{eq: n mu chi 2ka}
\end{align}
for $n$ sufficiently large uniformly in $k\geq 2$.
Next, we aim to prove that $(1-F\left(g_n\right))\sim b_{n}/n$.
On the one hand we have that $F(F^{\leftarrow}(x))\geq x$ 
and on the other hand $F(F^{\leftarrow}(x)-1)\leq x$.
Hence, the definition of $(g_n)$ in \eqref{eq: un} gives
\begin{align}
 \frac{b_n}{n}&\sim \frac{b_n-\zeta_n}{n}
 \geq 1-F\left(F^{\leftarrow}\left(1-\frac{b_{n}-\zeta_n}{n}\right)\right)
 = 1-F\left(g_n\right)
 = \frac{L\left(g_n\right)}{\left(g_n\right)^{\alpha}}
  \sim \frac{L\left(g_n-1\right)}{\left(g_n-1\right)^{\alpha}}\notag\\
 &=1-F\left(g_n-1\right)
 = 1-F\left(F^{\leftarrow}\left(1-\frac{b_{n}-\zeta_n}{n}\right)-1\right)
 \geq \frac{b_n-\zeta_n}{n}
 \sim \frac{b_n}{n}\label{eq: bn asymp}
\end{align}
and thus
\begin{align}
 n\cdot\mu\left(\chi>2^{k-1}\cdot g_n\right)<\frac{b_n\cdot 2^{3/4\cdot \alpha}}{2^{k\cdot \alpha/2}},\label{eq: n mu chi 2k}
\end{align}
for $n$ sufficiently large uniformly in $k\geq 2$
and thus
\begin{align}
 b_n-2r+1-n\cdot\mu\left(\chi>2^{k-1}\cdot g_n\right)
 \geq \frac{2^{\alpha/4}-1}{2^{\alpha/4}}\cdot b_n-2r+1
 \geq \frac{2^{\alpha/4}-1}{2^{\alpha}}\cdot b_n,\label{eq: bn-2r+1 ineq}
\end{align}
for $n$ sufficiently large uniformly in $k\geq 2$.

In order to proceed we will make use of the following lemma.
\begin{lemma}[{\cite[Lemma 2.9]{kessebohmer_strong_2018}}]\label{lemma: Tnfn chi deviation allg}
Let $\left(\Omega, \mathcal{A},  T, \mu,\mathcal{F},\left\|\cdot\right\|\right)$ fulfill Property $\mathfrak{C}$.
Then there exist positive constants $K$, $N$, $U$
such that for all 
$\varphi\in\mathcal{F}$ fulfilling $\int\varphi\;\mathrm{d}\mu=0$, all 
$u\in\mathbb{R}_{>0}$, 
and all $n\in\N_{>N}$ we have
\begin{align*}
\mu\left(\max_{i\leq n}\left|\mathsf{S}_i\varphi\right|\geq  u\right)
&\leq K\cdot \exp\left(-U\cdot \frac{ u}{\left\|\varphi\right\|}\cdot\min\left\{\frac{u}{n\cdot\left|\varphi\right|_1},1\right\}\right).
\end{align*}
\end{lemma}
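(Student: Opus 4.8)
The plan is to derive an exponential Laplace-transform (moment-generating-function) estimate for $\mathsf S_n\varphi$ via the Nagaev--Guivarc'h spectral perturbation method, and then to convert it into the stated maximal tail bound by a Chernoff inequality followed by a martingale--coboundary decomposition. \emph{Step 1 — twisted transfer operator.} Since \eqref{ineq <l} forces $|\varphi|_\infty\le\|\varphi\|<\infty$, and $\mathcal F$ is a Banach algebra containing the constants, $e^{t\varphi}=\sum_{k\ge 0}(t\varphi)^k/k!\in\mathcal F$ with $\|e^{t\varphi}\|\le e^{|t|\|\varphi\|}$; hence the twisted operator $\widehat T_t\colon f\mapsto\widehat T(e^{t\varphi}f)$ is bounded on $\mathcal F$ and analytic in $t$. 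From $e^{t\mathsf S_n\varphi}=e^{t\varphi}\cdot(e^{t\mathsf S_{n-1}\varphi}\circ T)$ and $\widehat T((g\circ T)f)=g\cdot\widehat T f$ one gets $\widehat T^n(e^{t\mathsf S_n\varphi}\mathbf 1)=\widehat T_t^n\mathbf 1$, so $\int e^{t\mathsf S_n\varphi}\,\mathrm{d}\mu=\int\widehat T_t^n\mathbf 1\,\mathrm{d}\mu$. By Property $\mathfrak C$, $\widehat T=\widehat T_0$ has a spectral gap with simple leading eigenvalue $1$, eigenfunction $\mathbf 1$ and eigenprojection $Pf=\int f\,\mathrm{d}\mu\cdot\mathbf 1$ (using $T$-invariance of $\mu$ and $\int\widehat T f\,\mathrm{d}\mu=\int f\,\mathrm{d}\mu$). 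As $\|\widehat T_t-\widehat T_0\|\le K_0(e^{|t|\|\varphi\|}-1)$, analytic perturbation theory provides $\delta>0$ (depending only on the gap of $\widehat T$) with $\widehat T_t=\lambda_tP_t+N_t$ for $|t|\|\varphi\|\le\delta$, where $\lambda_t,P_t,N_t$ are analytic, $P_t$ has rank one, $\sup_{|t|\|\varphi\|\le\delta}\sup_n\rho^{-n}\|N_t^n\|<\infty$ for a fixed $\rho<1$, $\lambda_0=1$, $\lambda_t>0$ is real (as $\widehat T_t$ is a positive operator), and $\lambda_0'=\int\varphi\,\mathrm{d}\mu=0$.

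\emph{Step 2 — sharp eigenvalue bound and the Laplace transform.} Write $\log\lambda_t=\sum_{j\ge 2}\frac{t^j}{j!}c_j$ (no linear term, since $\lambda_0'=0$); the $c_j$ are, up to combinatorial constants, sums of connected correlation integrals of $\varphi$. As $\int\varphi\,\mathrm{d}\mu=0$, we have $\varphi\in\ker P$, whence $\widehat T^k\varphi=N_0^k\varphi$ and $\|\widehat T^k\varphi\|\le C\rho^k\|\varphi\|$; estimating each correlation integral by placing exactly one factor in $L^1$ and the remaining ones in $L^\infty\le\|\cdot\|$ — e.g.\ $|\int\varphi\cdot(\varphi\circ T^k)\,\mathrm{d}\mu|=|\int\widehat T^k\varphi\cdot\varphi\,\mathrm{d}\mu|\le|\widehat T^k\varphi|_\infty|\varphi|_1\le C\rho^k\|\varphi\||\varphi|_1$ — gives $|c_j|\le C_j\|\varphi\|^{j-1}|\varphi|_1$ and hence $|\log\lambda_t|\le C t^2|\varphi|_1\|\varphi\|$ for $|t|\|\varphi\|\le\delta_0$ with an absolute $\delta_0\le\delta$. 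Together with Step 1 this yields $\int e^{t\mathsf S_n\varphi}\,\mathrm{d}\mu=\lambda_t^n\int P_t\mathbf 1\,\mathrm{d}\mu+\int N_t^n\mathbf 1\,\mathrm{d}\mu\le K_2\,e^{C_2 n t^2|\varphi|_1\|\varphi\|}$ for all $n$ and $|t|\|\varphi\|\le\delta_0$, with constants uniform in $\varphi$.

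\emph{Step 3 — Chernoff optimisation and the maximal step.} Markov's inequality gives, for $u>0$ and $0\le t\le\delta_0/\|\varphi\|$, $\mu(\mathsf S_n\varphi\ge u)\le K_2\exp(-tu+C_2nt^2|\varphi|_1\|\varphi\|)$; choosing $t=\min\{u/(2C_2n|\varphi|_1\|\varphi\|),\ \delta_0/\|\varphi\|\}$ produces $\mu(\mathsf S_n\varphi\ge u)\le K_2\exp(-U\tfrac u{\|\varphi\|}\min\{\tfrac u{n|\varphi|_1},1\})$, and the same applied to $-\varphi$ (again mean zero, same norms) gives the two-sided version for $\mathsf S_n\varphi$. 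To reach $\max_{i\le n}|\mathsf S_i\varphi|$ I would use the martingale--coboundary decomposition $\varphi=\psi+g-g\circ T$ with $g:=\sum_{k\ge 1}\widehat T^k\varphi\in\mathcal F$ (convergent since $\|\widehat T^k\varphi\|\le C\rho^k\|\varphi\|$), so $\|g\|\le C'\|\varphi\|$ and $\widehat T\psi=0$, i.e.\ $\psi$ has zero conditional expectation given $T^{-1}\mathcal A$. Then $\bigl(\sum_{l=n-m}^{n-1}\psi\circ T^l\bigr)_{m=0}^n$ is a martingale for the increasing filtration $(T^{-(n-m)}\mathcal A)_{m=0}^n$, so Doob's maximal inequality for the submartingale $e^{t|\cdot|}$ bounds $\mu(\max_{i\le n}|\mathsf S_i\psi|\ge u)$ by $2e^{-tu/2}\int e^{t|\mathsf S_n\psi|}\,\mathrm{d}\mu$; since $\mathsf S_i\varphi=\mathsf S_i\psi-g+g\circ T^i$ gives $\max_{i\le n}|\mathsf S_i\varphi|\le\max_{i\le n}|\mathsf S_i\psi|+2|g|_\infty$, and $|\mathsf S_n\psi|\le|\mathsf S_n\varphi|+2|g|_\infty$, everything reduces to the Laplace bounds of Step 2 for $\pm\mathsf S_n\varphi$ (shifted by an additive $O(\|\varphi\|)$) and the same optimisation in $t$. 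Finally the range $u\lesssim\|\varphi\|$, where $\tfrac u{\|\varphi\|}\min\{\cdots\}=O(1)$ so the asserted bound exceeds $1$ once $K$ is large enough, together with the precise fixing of $K,N,U$, are routine.

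\emph{Expected main obstacle.} The crux is not the Chernoff step but obtaining the \emph{sharp} variance-type proxy $n|\varphi|_1\|\varphi\|$ in Step 2 — rather than the crude $n\|\varphi\|^2$ that comes from bounding all factors in $L^\infty$ — since this is precisely what produces the inner factor $u/(n|\varphi|_1)$; it relies on the ``one factor in $L^1$'' bookkeeping for the cumulant coefficients $c_j$, which in turn uses $\int\varphi\,\mathrm{d}\mu=0$ together with the exponential contraction of $\widehat T$ on $\ker P$. A secondary nuisance is the maximal step: the naive remedy of cutting $\{1,\dots,n\}$ into blocks of length $\sim u/\|\varphi\|$ (so that partial sums oscillate by at most $u/2$ within a block) loses a prefactor $\sim n\|\varphi\|/u$ that cannot be absorbed into the exponential for large $n$, which is why I would go through the martingale--coboundary decomposition instead.
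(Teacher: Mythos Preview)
The present paper does not prove this lemma at all: it is quoted as \cite[Lemma~2.9]{kessebohmer_strong_2018} and used as a black box throughout Section~\ref{sec: proof main thm}. There is therefore no in-paper argument to compare your sketch against; one can only judge it on its own merits and against what the Nagaev--Guivarc'h method in \cite{kessebohmer_strong_2018} is expected to look like.

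Your overall architecture --- twist the transfer operator, use analytic perturbation theory uniformly in the rescaled parameter $|t|\,\|\varphi\|$, extract a Bernstein-type Laplace bound $\int e^{t\mathsf S_n\varphi}\,\mathrm d\mu\le K_2\exp\bigl(C_2\,n\,t^2\,|\varphi|_1\,\|\varphi\|\bigr)$, Chernoff-optimise, and pass to the maximum via a martingale--coboundary decomposition --- is the standard and correct route, and almost certainly the one taken in \cite{kessebohmer_strong_2018}. Two concrete issues are worth flagging. First, there is a sign slip in your coboundary: with $g=\sum_{k\ge 1}\widehat T^{\,k}\varphi$ and $\varphi=\psi+g-g\circ T$ one computes $\widehat T\psi=\widehat T\varphi-(g-\widehat T\varphi)+g=2\,\widehat T\varphi$, not $0$; the correct decomposition is $\varphi=\psi+g\circ T-g$, after which $\widehat T\psi=0$ and your Doob argument goes through (with $\max_{i\le n}|\mathsf S_i\psi|\le 2\max_m|M_m|$ since $\mathsf S_{n-m}\psi=\mathsf S_n\psi-M_m$). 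Second, Step~2 is where the real content lies, and your ``one factor in $L^1$'' heuristic is exactly right for $c_2$ but is only a slogan for $j\ge 3$: obtaining $|c_j|\le C_j\,\|\varphi\|^{\,j-1}\,|\varphi|_1$ with at most factorial growth of $C_j$ requires an honest inductive or resolvent-expansion argument for the higher derivatives of $\lambda_t$ (or an equivalent cluster bound on multi-time correlations), not merely H\"older on a single product. You correctly identify this as the expected obstacle; just be aware that this is where essentially all of the work in a rigorous proof sits.
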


Combining  \eqref{eq: Phi kn 1st estimate}, \eqref{eq: bn-2r+1 ineq}, and
Lemma \ref{lemma: Tnfn chi deviation allg} yields
\begin{align}
\mu\left(\Phi_{k,n}'\right)
&\leq\mu\left(\left|\sum_{i=1}^n\overline{\mathbbm{1}}_{\left\{\chi\circ T^{i-1}>2^{k-1}\cdot g_n\right\}}\right|> \frac{2^{\alpha/4}-1}{2^{\alpha}}\cdot b_n\right)\notag\\
&\leq K\cdot \exp\left(-U\cdot \frac{2^{\alpha/4}-1}{2^{\alpha}}\cdot \frac{b_n}{\left\|\overline{\mathbbm{1}}_{\left\{\chi>2^{k-1}\cdot g_n\right\}}\right\|}\right.\notag\\
&\qquad\qquad\left.\cdot\min\left\{\frac{2^{\alpha/4}-1}{2^{\alpha}}\cdot \frac{b_n}{n\cdot\left|\overline{\mathbbm{1}}_{\left\{\chi>2^{k-1}\cdot g_n\right\}}\right|_1},1\right\}\right),\label{eq: estim Phikn'}
\end{align}
for $n$ sufficiently large uniformly in $k\geq 2$.
Furthermore,  we have that 
\begin{align*}
\left|\overline{\mathbbm{1}}_{\left\{\chi>2^{k-1}\cdot g_n\right\}}\right|_1
&= 2\cdot\frac{L\left(2^{k-1}\cdot g_n\right)}{\left(2^{k-1}\cdot g_n\right)^{\alpha}}\cdot\left(1-\frac{L\left(2^{k-1}\cdot g_n\right)}{\left(2^{k-1}\cdot g_n\right)^{\alpha}}\right)
\leq 2\cdot\frac{L\left(2^{k-1}\cdot g_n\right)}{\left(2^{k-1}\cdot g_n\right)^{\alpha}}
\end{align*}
and \eqref{eq: n mu chi 2k} implies 
\begin{align}
\left|\overline{\mathbbm{1}}_{\left\{\chi>2^{k-1}\cdot g_n\right\}}\right|_1
&\leq 4\cdot\frac{b_n}{2^{\alpha/2\cdot k}\cdot n},\label{eq: 1 norm 1}
\end{align}
for $n$ sufficiently large uniformly in $k\geq 2$.
With $K_{1}$ defined in \eqref{C 1} we get
\begin{align}
 \left\|\overline{\mathbbm{1}}_{\left\{\chi>2^{k-1}\cdot g_n\right\}}\right\|
 &\leq \left\|\mathbbm{1}_{\left\{\chi>2^{k-1}\cdot g_n\right\}}\right\|+\left\|\mu\left(\chi>2^{k-1}\cdot g_n\right)\right\|\notag\\
 &=\left\|\mathbbm{1}_{\left\{\chi>2^{k-1}\cdot g_n\right\}}\right\|+\mu\left(\chi>2^{k-1}\cdot g_n\right)\left\|\mathbbm{1}\right\|
 \leq K_{1}+\left\|\mathbbm{1}\right\|\label{eq: norm overline 1}
\end{align}
and thus an application of \eqref{eq: estim Phikn'}, \eqref{eq: 1 norm 1}, and \eqref{eq: norm overline 1} gives
\begin{align}
\mu\left(\Phi_{k,n}'\right)
&\leq K\cdot \exp\left(-U\cdot \frac{2^{\alpha/2}-1}{2^{\alpha}}\cdot \frac{b_n}{K_{1}+\left\|1\right\|}\cdot\min\left\{\frac{\left(2^{\alpha/2}-1\right)\cdot 2^{\alpha/2\cdot k}}{4},1\right\}\right)\notag\\
&= K\cdot\exp\left(-W_{\alpha}\cdot b_n\right),\label{eq: mu Phi kn estim}
\end{align}
for $n$ sufficiently large uniformly in $k\geq 2$, where 
\begin{align*}
 W_{\alpha}\coloneqq U\cdot \frac{\left(2^{\alpha/2}-1\right)^2}{2^{\alpha+2}\cdot\left(K_{1}+\left\|1\right\|\right)}.
\end{align*}
By using this estimate and \eqref{eq: Tn 2k Potter} we can further estimate
\begin{align}
 \int \mathsf{T}_{n}^{2^{k}\cdot g_n}\chi\;\mathrm{d}\mu\cdot\mu\left(\Phi_{k,n}'\right)
 &\leq K\cdot2^{k+1}\cdot \exp\left(-W_{\alpha}\cdot b_n\right)\cdot \int \mathsf{T}_{n}^{g_n}\chi\;\mathrm{d}\mu.\label{eq: int T mu Phi 1}
\end{align}
This will later give us the estimate for small $k$, let us proceed with an estimate for large values of $k$.
Let $m\coloneqq \left\lfloor 2/\alpha\right\rfloor +1$.
There exists $N\in\mathbb{N}$ such that for all $n\geq N$ and $k\in\mathbb{N}$ we have 
\begin{align}
 \Phi_{k,n}'\subset \left\{\# \left\{j\leq n\colon \chi\circ T^{j-1} >2^{k-1}\cdot g_n\right\}\geq m  \cdot r\right\}
 \eqqcolon  \Upsilon_{k,n}.\label{eq: def Upsilon knm}
\end{align}
We have that 
\begin{align*}
 \Upsilon_{k,n}\subset \bigcup_{\substack{ 1\leq i_1<\ldots<i_m\leq n\\ i_j-i_{j-1}\geq r, j=2,\ldots,n}}\bigcap_{j=1}^m\left\{\chi\circ T^{i_j-1}>2^{k-1}\cdot g_n\right\}
\end{align*}
and thus
\begin{align*}
 \mu\left(\Upsilon_{k,n}\right)\leq \sum_{\substack{ 1\leq i_1<\ldots<i_m\leq n\\ i_j-i_{j-1}\geq r, j=2,\ldots,n}}\mu\left(\bigcap_{j=1}^m\left\{\chi\circ T^{i_j-1}>2^{k-1}\cdot g_n\right\}\right).
\end{align*}
Each of the summands can be estimated using \eqref{eq: int XY estim psi}:
\begin{align*}
 \MoveEqLeft\mu\left(\bigcap_{j=1}^m\left\{\chi\circ T^{i_j-1}>2^{k-1}\cdot g_n\right\}\right)\\
 &\leq \mu\left(\chi\circ T^{i_1-1}>2^{k-1}\cdot g_n\right)\cdot \mu\left(\bigcap_{j=2}^m\left\{\chi\circ T^{i_j-1}>2^{k-1}\cdot g_n\right\}\right)\cdot\left(1+\bm{\psi}\left(i_2-i_1\right)\right)\\
 &\leq \mu\left(\chi\circ T^{i_1-1}>2^{k-1}\cdot g_n\right)\cdot \mu\left(\bigcap_{j=2}^m\left\{\chi\circ T^{i_j-1}>2^{k-1}\cdot g_n\right\}\right)\cdot\left(1+\bm{\psi}\left( r\right)\right)\\
 &\;\;\vdots\\
 &\leq \prod_{j=1}^m\mu\left(\chi\circ T^{i_j-1}>2^{k-1}\cdot g_n\right)\cdot\left(1+\bm{\psi}\left( r\right)\right)^{m-1}\\
 &=\mu\left(\chi>2^{k-1}\cdot g_n\right)^m\cdot\left(1+\bm{\psi}\left( r\right)\right)^{m-1}\\
 &\leq \mu\left(\chi>2^{k-1}\cdot g_n\right)^m\cdot 2^{m-1}.
\end{align*}
We have that 
\begin{align*}
 \#\left\{\left(i_1,\ldots,i_m\right)\colon 1\leq i_1<\ldots<i_m\leq n\right\}
 =\binom{n}{m}<n^m
\end{align*}
implying
\begin{align*}
 \mu\left(\Upsilon_{k,n}\right)
 &\leq \left(n\cdot \mu\left(\chi>2^{k-1}\cdot g_n\right)\right)^m\cdot 2^{m-1}.
\end{align*}
With \eqref{eq: n mu chi 2k} we conclude
\begin{align*}
 \mu\left(\Upsilon_{k,n}\right)
 &\leq \left(\frac{2\cdot b_n}{2^{k\cdot \alpha/2}}\right)^m\cdot 2^{m-1},
\end{align*}
 for $n$ sufficiently large uniformly in $k\geq 2$.
Combining this estimate with \eqref{eq: Tn 2k Potter} and \eqref{eq: def Upsilon knm} gives
\begin{align}
 \int \mathsf{T}_{n}^{2^{k}\cdot g_n}\chi\;\mathrm{d}\mu\cdot\mu\left(\Phi_{k,n}'\right)
 &\leq  2^{2m}\cdot 2^{k\cdot\left(1-\alpha/2\cdot m\right)}\cdot b_n^m \cdot \int \mathsf{T}_{n}^{g_n}\chi\;\mathrm{d}\mu,\label{eq: int T mu Phi 2}
\end{align}
for $n$ sufficiently large uniformly in $k\geq 2$.

Finally, we combine  the estimates for small and large $k$ given in \eqref{eq: int T mu Phi 1} and \eqref{eq: int T mu Phi 2}  to obtain
\begin{align}
\MoveEqLeft\sum_{k=2}^{\infty}\int \mathsf{T}_{n}^{2^{k}\cdot g_n}\chi\;\mathrm{d}\mu\cdot\mu\left(\Phi_{k,n}'\right)\notag\\
&\leq \left(\sum_{k=2}^{b_n^{1/2}-1} K\cdot2^{k+1}\cdot \exp\left(-W_{\alpha}\cdot b_n\right)
+\sum_{k=b_n^{1/2}}^{\infty} 2^{2m}\cdot 2^{k\cdot\left(1-\alpha/2\cdot m\right)}\cdot b_n^m\right) \cdot \int \mathsf{T}_{n}^{g_n}\chi\;\mathrm{d}\mu.\label{eq: sum int T mu Phi 2}
\end{align}
Estimating the sums of the first factor of \eqref{eq: sum int T mu Phi 2} separately implies
\begin{align}
\sum_{k=2}^{b_n^{1/2}-1} K\cdot2^{k+1}\cdot \exp\left(-W_{\alpha}\cdot b_n\right)
&< K\cdot2^{b_n^{1/2}+1}\cdot \exp\left(-W_{\alpha}\cdot b_n\right)\notag\\
&=K\cdot \exp\left(\log2\cdot\left(b_n^{1/2}+1\right)-W_{\alpha}\cdot b_n\right)\notag\\
&\leq K\cdot \exp\left(-W_{\alpha}/2\cdot b_n\right),\label{eq: sum k=2 bn1/2}
\end{align}
for $n$ sufficiently large and 
\begin{align}
 \sum_{k=b_n^{1/2}}^{\infty} 2^{2m}\cdot 2^{k\cdot\left(1-\alpha/2\cdot m\right)}\cdot b_n^m
 &\leq  2^{2m}\cdot\frac{2^{b_n^{1/2}\cdot\left(1-\alpha/2\cdot m\right)}}{1-2^{1-\alpha/2\cdot m}}\cdot b_n^m,\label{eq: sum k=bn1/2}
\end{align}
for $n$ sufficiently large. Since we chose $m>2/\alpha$, \eqref{eq: sum k=bn1/2} tends to zero for $n$ tending to infinity.
Combining \eqref{eq: sum k=2 bn1/2} and \eqref{eq: sum k=bn1/2} with \eqref{eq: sum int T mu Phi 2} 
proves the statement of \ref{en: A2}.

\emph{Proof of \ref{en: A3}}:
In order to consider the case $k=1$ we notice that by the definition of $(g_n)$ in \eqref{eq: un} 
 and the fact that $F\left(F^{\leftarrow}\left(x\right)\right)\leq x$
we have that
\begin{align*}
\mu\left(\chi> g_n\right)
=1-F\left(g_n\right)
=1-F\left(F^{\leftarrow}\left(1-\frac{b_{n}-\zeta_n}{n}\right)\right)
\leq 
\frac{b_n-\zeta_n}{n}
\end{align*}
and thus
$b_n-n\cdot\mu\left(\chi> g_n\right)\geq\zeta_n$. 
If we combine this with 
\eqref{eq: Phi kn 1st estimate}, we can conclude
\begin{align}
 \mu\left(\Phi_{1,n}'\right)
&\leq \mu\left(\left|\sum_{i=1}^n\overline{\mathbbm{1}}_{\left\{\chi\circ T^{i-1}>g_n\right\}}\right|\geq \zeta_n-2r +1\right)
\leq \mu\left(\left|\sum_{i=1}^n\overline{\mathbbm{1}}_{\left\{\chi\circ T^{i-1}>g_n\right\}}\right|\geq \zeta_n/2\right),\label{eq: mu Phi n1}
\end{align}
for $n$ sufficiently large.
Using Lemma \ref{lemma: Tnfn chi deviation allg} implies
\begin{align}
  \mu\left(\Phi_{1,n}'\right)
 &\leq K\cdot \exp\left(- \frac{ U\cdot\zeta_n}{2\cdot \left\|\overline{\mathbbm{1}}_{\left\{\chi>g_n\right\}}\right\|}\cdot\min\left\{\frac{\zeta_n}{2\cdot n\cdot\left|\overline{\mathbbm{1}}_{\left\{\chi>g_n\right\}}\right|_1},1\right\}\right),\label{eq: Phi 1nr ineq}
\end{align}
for $n$ sufficiently large.
Using \eqref{eq: bn asymp}, 
we have that 
\begin{align*}
\left|\overline{\mathbbm{1}}_{\left\{\chi>g_n\right\}}\right|_1
 \leq 2\mu\left(\chi>g_n\right)=2\cdot\left(1-F\left(g_n\right)\right)\leq 3 b_n/n, 
\end{align*} 
for $n$ sufficiently large.
Therefore,
\begin{align}
 \min\left\{\frac{\zeta_n}{2\cdot n\cdot\left|\overline{\mathbbm{1}}_{\left\{\chi>g_n\right\}}\right|_1},1\right\}
 &\geq \min\left\{\frac{\zeta_n}{n\cdot 6 b_n/n},1\right\}
 =\min\left\{\frac{\zeta_n}{6 b_n},1\right\}=\frac{\zeta_n}{6 b_n},\label{eq: min frac 1}
\end{align}
for $n$ sufficiently large.
Combining \eqref{eq: norm overline 1} with \eqref{eq: Phi 1nr ineq} and \eqref{eq: min frac 1} and using the definition of $(\zeta_n)$ yields for $n$ sufficiently
large that
\begin{align}
  \mu\left(\Phi_{1,n}'\right)
  &\leq K\cdot \exp\left(-\frac{ U\cdot\zeta_n}{2\left(K_{1}+\left\|\mathbbm{1}\right\|\right)}\cdot\frac{\zeta_n}{6 b_n}\right)
  =K\cdot \exp\left(-\frac{ U\cdot b_n^{1/3}}{12\left(K_{1}+\left\|\mathbbm{1}\right\|\right)}\right),\label{eq: mu Phi 1nr final ineq}
\end{align}
 for $n$ sufficiently large
which tends to zero for $n$ tending to infinity.
If we combine this with \eqref{eq: Tn 2k Potter}, we obtain \ref{en: A3}.\medskip

\emph{Proof of \ref{en: B}}:
We set $\overline{\Phi}_n\coloneqq \left\{ \#\left\{ i\leq n\colon \chi\circ T^{i-1}>g_n\right\} < n\cdot \mu\left(\chi>g_n\right)-\zeta_n\right\}$ and have that 
\begin{align*}
\int \mathsf{S}_{n}^{b_{n}}\chi\;\mathrm{d}\mu
& \geq\int \mathsf{S}_{n}^{b_{n}}\chi\cdot\mathbbm{1}_{\overline{\Phi}_n^c }\;\mathrm{d}\mu
\geq \int \mathsf{T}_{n}^{g_{n}}\chi\cdot\mathbbm{1}_{\overline{\Phi}_n^c }\;\mathrm{d}\mu- \left(b_n-n\cdot \mu\left(\chi>g_n\right)+\zeta_n\right)\cdot g_n\notag\\
&=\int \mathsf{T}_{n}^{g_{n}}\chi\;\mathrm{d}\mu-\int \mathsf{T}_{n}^{g_{n}}\chi\cdot\mathbbm{1}_{\overline{\Phi}_n}\;\mathrm{d}\mu- \left(b_n-n\cdot \mu\left(\chi>g_n\right)+\zeta_n\right)\cdot g_n.
\end{align*}
It is sufficient to show
\begin{align}
 \int \mathsf{T}_{n}^{g_{n}}\chi\cdot\mathbbm{1}_{\overline{\Phi}_n}\;\mathrm{d}\mu=o\left(\int \mathsf{T}_{n}^{g_{n}}\chi\;\mathrm{d}\mu\right)\label{eq: int Tngn o}
\end{align}
and 
\begin{align}
  \left(b_n-n\cdot \mu\left(\chi>g_n\right)+\zeta_n\right)\cdot g_n=o\left(\int \mathsf{T}_{n}^{g_{n}}\chi\;\mathrm{d}\mu\right).\label{eq: zetan gn o}
\end{align}
We start with showing \eqref{eq: zetan gn o}.
We have by \eqref{E S*} and the definition
of $F$ that  
\begin{align*}
\int\mathsf{T}_{n}^{g_{n}}\chi\;\mathrm{d}\mu & \sim\frac{\alpha}{1-\alpha}\cdot n\cdot g_{n}^{1-\alpha}\cdot L\left(g_{n}\right)
  =\frac{\alpha}{1-\alpha}\cdot n\cdot \left(1-F\left(g_{n}\right)\right)\cdot g_{n}.
\end{align*}
This together with \eqref{eq: bn asymp}
shows \eqref{eq: zetan gn o}.

Let us look at the asymptotic \eqref{eq: int Tngn o}.
Similarly as in the proof of \ref{en: A1}
set 
\begin{align*}
 \mathcal{K}_{n}^{i}&\coloneqq\left\{J=(j_m)_{1\leq m\leq n}\in\left\{1,2,3\right\}^n\colon j_m\neq 3\text{ for }m\in \Gamma_{n,i}\text{ and }j_m=3\text{ for }m\in (\Gamma_{ n,i})^c\right.\\
 &\qquad\left.\text{ and }\#\left\{m\in  \Gamma_{n,i}\colon \chi\circ T^{m-1}> g_n\right\}<b_n-2\zeta_n+2r-1\right\}.
\end{align*}
This implies 
\begin{align*}
  \overline{\Phi}_n\subset\biguplus_{J\in\mathcal{K}_{n}^i}D_{k,n,J}\subset \left\{\#\left\{m\leq n\colon \chi\circ T^{m-1}> g_n\right\}< n\cdot\mu\left(\chi>g_n\right)-\zeta_n+2r-1\right\}\eqqcolon \overline{\Phi}_{n}'.
\end{align*}
 
Using an analogous argument  as in \ref{en: A1} we obtain with the help of \eqref{eq: int chi 1D < psi int chi int 1D} 
that 
\begin{align}
 \int \mathsf{T}_{n}^{g_{n}}\chi\cdot\mathbbm{1}_{\overline{\Phi}_{n}}\;\mathrm{d}\mu
 &\leq\sum_{i=1}^n\sum_{J\in\mathcal{K}_n^i}\upperleft{g_{n}}{}{\chi}\circ T^{i-1}\cdot\mathbbm{1}_{D_{k,n,J}}\;\mathrm{d}\mu\notag\\
 &\leq\sum_{i=1}^n\sum_{J\in\mathcal{K}_n^i}\frac{\left(1+\bm{\psi}\left(r\right)\right)^2}{1-{\bm \psi}\left(r\right)}\cdot\int\upperleft{g_{n}}{}{\chi}\circ T^{i-1}\;\mathrm{d}\mu
  \cdot \mu\left(D_{k,n,J}\right)\notag\\
 &\leq\frac{\left(1+\bm{\psi}\left(r\right)\right)^2}{1-{\bm \psi}\left(r\right)}\cdot\int\mathsf{T}_{n}^{g_n}\chi\;\mathrm{d}\mu
  \cdot \mu\left(\overline{\Phi}_{n}'\right) .\label{eq: int Tn Gamman}
\end{align}

In the next steps we estimate $\mu\left(\overline{\Phi}_{n}'\right)$. 
 We have that 
\begin{align*}
 \mu\left(\overline{\Phi}_{n}'\right)
 & = \mu\left(\sum_{i=1}^n\overline{\mathbbm{1}}_{\left\{\chi\circ T^{i-1}>g_n\right\}}< -\zeta_n+2r-1\right)
 \leq \mu\left(\left|\sum_{i=1}^n\overline{\mathbbm{1}}_{\left\{\chi\circ T^{i-1}>g_n\right\}}\right|> \zeta_n/2\right),
\end{align*}
for $n$ sufficiently large.
Hence, \eqref{eq: mu Phi n1} implies that we can estimate $\mu(\overline{\Phi}_{n}')$ in the same manner as $\mu\left(\Phi_{1,n}'\right)$
and obtain by \eqref{eq: mu Phi 1nr final ineq} that we have for sufficiently large $n$ that $\mu(\overline{\Phi}_{n}')\leq \exp( U\cdot b_n^{1/3}/(12(K_{1}+\left\|\mathbbm{1}\right\|)))$ 
which tends to zero  for $n$ tending to infinity.
Combining this observation with \eqref{eq: int Tn Gamman} 
proves \eqref{eq: int Tngn o} which was the final step in the proof of \ref{en: B}.
\end{proof}

\begin{proof}[Proof of Lemma \ref{lem: conv prob}]
We make use of the following upper estimate:
\begin{align}
 \mu\left(\left|\frac{\mathsf{S}_n^{b_n}\chi}{d_n}-1\right|>\epsilon\right)
 &\leq \mu\left(\frac{\mathsf{S}_n^{b_n}\chi}{d_n}-1>\epsilon\right)
 + \mu\left(\frac{\mathsf{S}_n^{b_n}\chi}{d_n}-1<-\epsilon\right)\label{eq: Snbnchi estim}
\end{align}
and we will show that both terms on the right-hand side tend to zero for $n$ tending to infinity.
In order to estimate the first summand of \eqref{eq: Snbnchi estim} we note that 
\begin{align*}
 \mu\left(\frac{\mathsf{S}_n^{b_n}\chi}{d_n}-1>\epsilon\right)
 &\leq \mu\left(\mathsf{T}_n^{g_n}\chi<\mathsf{S}_n^{b_n}\chi\right)
  +\mu\left(\frac{\mathsf{T}_n^{g_n}\chi}{d_n}-1>\epsilon\right).
\end{align*}
With $\Phi_{k,n}$ given in \eqref{eq: def Phi kn} we also have $\left\{\mathsf{T}_n^{g_n}\chi<\mathsf{S}_n^{b_n}\chi\right\}=\Phi_{1,n}$.
An application of \eqref{eq: Un bn1} and \eqref{eq: mu Phi kn estim}
implies that $\mu\left(\Phi_{1,n}\right)$ tends to zero for $n$ tending to infinity. 

In order to estimate $\mu\left(\mathsf{T}_n^{g_n}\chi/d_n-1>\epsilon\right)$
we note that \eqref{eq: asymptotic in A} implies $d_n\sim \int\mathsf{T}_n^{g_n}\chi\;\mathrm{d}\mu$. 
If we set $\overline{\mathsf{T}}^r_n\chi=\mathsf{T}^r_n\chi-\int\mathsf{T}^r_n\chi\;\mathrm{d}\mu$,
then
\begin{align*}
 \mu\left(\frac{\mathsf{T}_n^{g_n}\chi}{d_n}-1>\epsilon\right)
 &\leq \mu\left(\overline{\mathsf{T}}_n^{g_n}\chi>\frac{\epsilon}{2}\cdot d_n\right),
\end{align*}
for $n$ sufficiently large,
and an application of Lemma \ref{lemma: Tnfn chi deviation allg} hence gives
\begin{align}
  \mu\left(\frac{\mathsf{T}_n^{g_n}\chi}{d_n}-1>\epsilon\right)
  &\leq K\cdot \exp\left(-U\cdot \frac{\epsilon\cdot d_n}{2\cdot \left\|\upperleft{g_n}{}{\overline{\chi}}\right\|}\cdot \min\left\{\frac{\epsilon\cdot d_n}{2\cdot n\cdot\left|\upperleft{g_n}{}{\overline{\chi}}\right|_1},1\right\}\right),\label{eq: Tngn estim}
\end{align}
 for $n$ sufficiently large
where $\upperleft{r}{}{\overline{\chi}}\coloneqq \upperleft{r}{}{\chi}-\int \upperleft{r}{}{\chi}\mathrm{d}\mu$. 
We note that the first part of \eqref{C 1} implies
\begin{align}
 \left\|\upperleft{g_n}{}{\overline{\chi}}\right\|
 &\leq \left\|\upperleft{g_n}{}{\chi}\right\|+\left\|\int\upperleft{g_n}{}{\chi}\mathrm{d}\mu\right\|
 \leq g_n\cdot K_{1}+ \int\upperleft{g_n}{}{\chi}\mathrm{d}\mu\cdot \left\|\mathbbm{1}\right\|
 \leq g_n\cdot \left(K_{1}+ \left\|\mathbbm{1}\right\|\right)\label{eq: norm estim}
\end{align}
and $n\cdot\left|\upperleft{g_n}{}{\overline{\chi}}\right|_1
\leq 2 n\cdot \int \upperleft{g_n}{}{\chi}\mathrm{d}\mu
=2 \int\mathsf{T}_n^{g_n}\chi\mathrm{d}\mu
\leq 3d_n$,
for $n$ sufficiently large, where the last inequality follows from \eqref{eq: asymptotic in A}.
Combining this with \eqref{eq: Tngn estim} and \eqref{eq: norm estim} yields 
\begin{align*}
   \mu\left(\frac{\mathsf{T}_n^{g_n}\chi}{d_n}-1>\epsilon\right)
   &\leq K\cdot \exp\left(-\frac{U\cdot\epsilon^2}{12\cdot\left(K_{1}+\left\|\mathbbm{1}\right\|\right)}
   \cdot  \frac{d_n}{g_n}\right),
\end{align*}
for $n$ sufficiently large  and $\epsilon$ sufficiently small. 
We obtain  with an analogous calculation as in \cite[Proof of Theorem 1.7]{kessebohmer_strong_2018}
that 
\begin{align*}
 g_n&=F^{\leftarrow}\left(1-\frac{b_n-\zeta_n}{n}\right)
 \sim \left(\frac{n}{b_n-\zeta_n}\right)^{1/\alpha}\cdot \left(L^{1/\alpha}\right)^{\#}\left(\left(\frac{n}{b_n-\zeta_n}\right)^{1/\alpha}\right)\\
 &\sim \left(\frac{n}{b_n}\right)^{1/\alpha}\cdot \left(L^{1/\alpha}\right)^{\#}\left(\left(\frac{n}{b_n}\right)^{1/\alpha}\right).
\end{align*}
Hence, by the definition of $(d_n)$ in \eqref{eq: def dn} we have that
$d_n/g_n\sim \alpha/\left(1-\alpha\right)\cdot b_n$
and 
\begin{align}
    \mu\left(\frac{\mathsf{T}_n^{g_n}\chi}{d_n}-1>\epsilon\right)
   &\leq K\cdot \exp\left(-\frac{U\cdot\epsilon^2\cdot \left(1-\alpha\right)}{13\cdot\left(K_{1}+\left\|\mathbbm{1}\right\|\right)\cdot \alpha}
   \cdot  b_n\right),\label{eq: mu Tngnchi/dn-1>eps}
\end{align}
for $n$ sufficiently large.
Consequently, the right-hand side tends to zero as  $(b_n)$ tends to infinity.

Next we will estimate the second summand of \eqref{eq: Snbnchi estim}.
For arbitrary $\delta>0$ we have that 
\begin{align*}
 \mu\left(\frac{\mathsf{S}_n^{b_n}\chi}{d_n}-1<-\epsilon\right)
 &\leq \mu\left(\mathsf{T}_{n}^{\left(1-\delta\right)\cdot g_n}\chi>\mathsf{S}_{n}^{b_{n}}\chi\right)
 + \mu\left(\frac{\mathsf{T}_n^{\left(1-\delta\right)\cdot g_n}\chi}{d_n}-1<-\epsilon\right).
\end{align*}
We first estimate 
\begin{align*}
 \mu\left(\mathsf{T}_{n}^{\left(1-\delta\right)\cdot g_n}\chi>\mathsf{S}_{n}^{b_{n}}\chi\right)
&=\mu\left(\sum_{i=1}^n\mathbbm{1}_{\left\{\chi\circ T^{i-1}>\left(1-\delta\right)\cdot g_n\right\}}< b_n\right)\notag\\
&=\mu\left(\sum_{i=1}^n\overline{\mathbbm{1}}_{\left\{\chi\circ T^{i-1}>\left(1-\delta\right)\cdot g_n\right\}}< b_n-n\cdot\mu\left(\chi>\left(1-\delta\right)\cdot g_n\right)\right).
\end{align*}
 By a calculation analogous to \eqref{eq: n mu chi 2ka} and \eqref{eq: bn asymp}
we obtain that 
$n\cdot\mu\left(\chi>\left(1-\delta\right)\cdot g_n\right)
 >b_n/\left(1-\delta\right)^ \alpha/2$,
for $n$ sufficiently large
from which we can conclude
\begin{align*}
  \mu\left(\mathsf{T}_{n}^{\left(1-\delta\right)\cdot g_n}\chi>\mathsf{S}_{n}^{b_{n}}\chi\right)
  &\leq \mu\left(\left|\sum_{i=1}^n\overline{\mathbbm{1}}_{\left\{\chi\circ T^{i-1}>\left(1-\delta\right)\cdot g_n\right\}}\right|> \left(\left(1-\delta\right)^{- \alpha/2}-1\right)\cdot b_n\right),
\end{align*}
for $n$ sufficiently large.
Applying Lemma \ref{lemma: Tnfn chi deviation allg} yields 
\begin{align*}
 \mu\left(\mathsf{T}_{n}^{\left(1-\delta\right)\cdot g_n}\chi>\mathsf{S}_{n}^{b_{n}}\chi\right)
  &\leq K\cdot\exp\left( -U\cdot \left(\left(1-\delta\right)^{-\alpha/2}-1\right)\cdot\frac{b_n}{\left\|\overline{\mathbbm{1}}_{\left\{\chi>\left(1-\delta\right)\cdot g_n\right\}}\right\|}\right.\\
  &\qquad\left.\cdot \min\left\{\left(\left(1-\delta\right)^{-\alpha/2}-1\right)\cdot \frac{b_n}{n\cdot \left|\overline{\mathbbm{1}}_{\left\{\chi>\left(1-\delta\right)\cdot g_n\right\}}\right|_1}  ,1\right\}\right),
\end{align*}
 for $n$ sufficiently large.

Similarly as in \eqref{eq: 1 norm 1} we obtain
$\left|\overline{\mathbbm{1}}_{\left\{\chi>\left(1-\delta\right)\cdot g_n\right\}}\right|_1
 \leq 4\cdot \left(1-\delta\right)^{-\alpha/2}\cdot b_n/n$,
for $n$ sufficiently large
and similarly as in \eqref{eq: norm overline 1} we obtain 
$\left\|\overline{\mathbbm{1}}_{\left\{\chi>\left(1-\delta\right)\cdot g_n\right\}}\right\|\leq K_{1}+\left\|\mathbbm{1}\right\|$. 
Setting 
\begin{align*}
 A_{\alpha,\delta}\coloneqq \frac{U\cdot \left(\left(1-\delta\right)^{-\alpha/2}-1\right)}{K_{1}+\left\|\mathbbm{1}\right\|}
 \cdot\min\left\{\frac{\left(1-\delta\right)^{\alpha/2}\cdot \left(\left(1-\delta\right)^{ -\alpha/2}-1\right)}{4},1\right\}
\end{align*}
implies
\begin{align*}
 \mu\left(\mathsf{T}_{n}^{\left(1-\delta\right)\cdot g_n}\chi>\mathsf{S}_{n}^{b_{n}}\chi\right)
 &\leq K\cdot \exp\left(-A_{\alpha,\delta}\cdot b_n\right),
\end{align*}
 for $n$ sufficiently large which tends to zero for $n$ tending to infinity.

Finally, we estimate 
$\mu\left(\mathsf{T}_n^{\left(1-\delta\right)\cdot g_n}\chi/d_n-1<-\epsilon\right)$.
From \eqref{E S*} and \eqref{eq: asymptotic in A} we can conclude that 
\begin{align*}
 \int \mathsf{T}_n^{\left(1-\delta\right)\cdot g_n}\chi\;\mathrm{d}\mu\sim \left(1-\delta\right)^{\alpha}\cdot d_n.
\end{align*}
Hence,
\begin{align*}
 \mu\left(\frac{\mathsf{T}_n^{\left(1-\delta\right)\cdot g_n}\chi}{d_n}-1<-\epsilon\right)
 &\leq \mu\left(\overline{\mathsf{T}}_n^{\left(1-\delta\right)\cdot g_n}\chi<-\left(\epsilon+\left(1-\delta/2\right)^{\alpha}-1\right)\cdot d_n\right),
\end{align*}
 for $n$ sufficiently large.
We can choose $\delta>0$ sufficiently small such that 
$\epsilon+\left(1-\delta/2\right)^{\alpha}-1\geq \epsilon/2$
and an application of Lemma \ref{lemma: Tnfn chi deviation allg} yields
\begin{align*}
 \mu\left(\frac{\mathsf{T}_n^{\left(1-\delta\right)\cdot g_n}\chi}{d_n}-1<-\epsilon\right)
  &\leq K\cdot \exp\left(-U\cdot \frac{\epsilon\cdot d_n}{2\cdot \left\|\upperleft{\left(1-\delta\right)\cdot g_n}{}{\overline{\chi}}\right\|}\cdot \min\left\{\frac{\epsilon\cdot d_n}{2\cdot n\cdot\left|\upperleft{\left(1-\delta\right)\cdot g_n}{}{\overline{\chi}}\right|_1},1\right\}\right).
\end{align*}
Similarly as above we have that 
$\left\|\upperleft{\left(1-\delta\right)\cdot g_n}{}{\overline{\chi}}\right\|\leq \left(1-\delta\right)\cdot g_n\cdot\left(K_{1}+\left\|\mathbbm{1}\right\|\right)<g_n\cdot\left(K_{1}+\left\|\mathbbm{1}\right\|\right)$
and $\left|\upperleft{\left(1-\delta\right)\cdot g_n}{}{\overline{\chi}}\right|_1\leq \left|\upperleft{g_n}{}{\overline{\chi}}\right|_1\leq 3d_n$
and with an analogous estimation as the one leading to \eqref{eq: mu Tngnchi/dn-1>eps} we obtain 
\begin{align*}
  \mu\left(\frac{\mathsf{T}_n^{\left(1-\delta\right)\cdot g_n}\chi}{d_n}-1<-\epsilon\right)
  &\leq K\cdot\exp\left(-\frac{U\cdot \epsilon^2\cdot \left(1-\alpha\right)}{13\left(K_{1}+\left\|\mathbbm{1}\right\|\right)\cdot \alpha}\cdot b_n\right),
\end{align*}
for $n$ sufficiently large  and $\epsilon$ sufficiently small, which tends to zero for $n$ tending to infinity. 
\end{proof}

\subsection{Proof of Theorem \ref{thm: counterex}}\label{subsec: proof thm counterex}
\begin{proof}[Proof of Theorem \ref{thm: counterex}]
 Since the Lebesgue measure is $T$-invariant, we notice that the system $([0,1),\mathcal{B},T,\mu, BV,\left\|\cdot\right\|_{BV},\chi)$ is 
 precisely the system of piecewise expanding interval maps 
 covered in \cite[Section 1.4]{kessebohmer_strong_2018}
  with $BV$ defined as in Definition \ref{def: bound var}.
 The last condition to check is that there exists a constant $K_{1}$   such that for all $\ell>0$ we have that
 $\left\|\upperleft{\ell}{}{\chi}\right\|_{BV}\leq K_{1}\cdot \ell$ and 
 $\left\|\mathbbm{1}_{\{\chi>\ell\}}\right\|_{BV}\leq K_{1}$
 which is obviously fulfilled for our choice of $\chi$.

 Let $\omega\in \left[0, x\right)$ with $x\leq 2^{-b_n}$,
 then 
 \begin{align*}
  \chi\left(\omega\right)&=\omega^{-\gamma},
  \left(\chi\circ T\right)\left(\omega\right)=\left(\omega\cdot 2^{-1}\right)^{-\gamma},
  \ldots, 
  \left(\chi\circ T^{b_n}\right)\left(\omega\right)=\left(\omega\cdot 2^{-b_n}\right)^{-\gamma}
 \end{align*}
 yielding $\mathsf{S}_n^{b_n}\chi\left(\omega\right)\leq \left(\chi\circ T^{b_n}\right)\left(\omega\right)=\left(\omega\cdot 2^{-b_n}\right)^{-\gamma}$. 
 Furthermore, $\left(\chi\circ T^{b_n}\right)\left(\omega\right)$ is decreasing in $\omega$. 
 Hence, $\mu\left(\mathsf{S}_n^{b_n}\chi\geq \left(\omega\cdot 2^{-b_n}\right)^{-\gamma}\right)\geq \omega$
 and thus 
\begin{align*}
  \int\mathsf{S}_n^{b_n}\chi\;\mathrm{d}\mu
  &\geq \mu\left(\mathsf{S}_n^{b_n}\chi\geq \left(\omega\cdot 2^{-b_n}\right)^{-\gamma}\right)\cdot \left(\omega\cdot 2^{-b_n}\right)^{-\gamma}
  \geq \omega^{1-\gamma}\cdot 2^{-b_n}.
\end{align*}
 Since $\omega$ can be chosen arbitrarily small, this implies $\int\mathsf{S}_n^{b_n}\chi\;\mathrm{d}\mu=\infty$.
\end{proof}

\subsection{Proof of the statement in Remark \ref{rem: iid special case}}\label{subsec: proof remark}
Last, we show how our results carry over to the i.i.d. case. 
\begin{proof}[Proof of the statement in Remark \ref{rem: iid special case}]
Let $(X_n)$ be a sequence of i.i.d.\ random variables mapping $\Omega\to\mathbb{R}$ with probability measure $\mathbb{P}$.
Then we define $Y\colon \Omega\to\mathbb{R}^{\N}$ by
$Y(\omega)\coloneqq \left(X_1(\omega),X_2(\omega),\ldots\right)$.
Further, let 
$\mu\coloneqq Y_*\mathbb{P}=\mathbb{P}\circ Y^{-1}$.
Since the random variables $(X_n)$ are independent and identical distributed, $\mu$ can be written as 
$\mu=\mathbb{P}\circ X_1^{-1}\times \mathbb{P}\circ X_2^{-1}\times\ldots$.
The measure $\mu$ is invariant and mixing with respect to the dynamics 
obtained by the shift map $\sigma\colon \mathbb{R}^{\N}\to\mathbb{R}^{\N}$
given by $\sigma\left(x_1,x_2,\ldots\right)=\left(x_2,x_3,\ldots\right)$. 
If we write $x=x_1x_2\ldots$ and set $\chi\left(x\right)=x_1$, we obtain  
$\left(\chi\circ T^{n-1}\right) Y(\omega)= X_n(\omega)$.

Furthermore, we might introduce the Banach space of functions $\mathcal{F}$ on the shift space $\mathbb{R}^{\N}$
as all functions $f\in\mathcal{L}^{\infty}$ with $\left|\cdot\right|_{\infty}$ as a norm such that $f(x)$ is already determined by $x_1$.
Obviously, $\mathcal{F}$ is a Banach space which contains the constant functions 
and fulfills \eqref{ineq <l} and \eqref{C 0 f}.

Furthermore, the transfer operator $\widehat{T}$ of the transformation $T=\sigma$ has a spectral gap on $\mathcal{F}$.
This can be easily seen by considering that for all $f\in\mathcal{L}^1$ and $g\in \mathcal{L}^{\infty}$ we have \eqref{hat CYRI}.
In case that $f\in\mathcal{F}$ we even have that
$\int\widehat{T}f\cdot g\;\mathrm{d}\mu=\int f\;\mathrm{d}\mu\cdot \int g\circ T\;\mathrm{d}\mu$,
which follows from the fact that $f$ and $g\circ T$ are independent with respect to $\mu$.
If $\widehat{T}f=\int f\;\mathrm{d}\mu$, the above equality is fulfilled for all $g\in\mathcal{L}^{\infty}$
and since the transfer operator is uniquely defined, the equality $\widehat{T}f=\int f\;\mathrm{d}\mu$ has to hold.
Since $\int f\;\mathrm{d}\mu$ is a projection, we can write $\widehat{T}f=Pf$ and do not even need an additional operator $N$, 
i.e.\ we have an even stronger statement than a spectral gap. 
It is also immediately clear that in the i.i.d.\ case we have $\bm{\psi}\left(n\right)=0$, for all $n\in\mathbb{N}$.

If we set $\chi\left(x\right)=x_1$ as the observable,
then \eqref{C 1}   are fulfilled and we can apply all theorems to this system.
\end{proof}


\begin{thebibliography}{KMS16}

\bibitem[Aar77]{aaronson_ergodic_1977}
J.~Aaronson.
\newblock On the ergodic theory of non-integrable functions and infinite
  measure spaces.
\newblock {\em Israel J. Math.}, 27(2):163--173, 1977.

\bibitem[AD01]{aaronson_local_2001}
J.~Aaronson and M.~Denker.
\newblock Local limit theorems for partial sums of stationary sequences
  generated by {G}ibbs-{M}arkov maps.
\newblock {\em Stoch. Dyn.}, 1(2):193--237, 2001.

\bibitem[AN03]{aaronson_trimmed_2003}
J.~Aaronson and H.~Nakada.
\newblock Trimmed sums for non-negative, mixing stationary processes.
\newblock {\em Stochastic Process. Appl.}, 104(2):173--192, 2003.

\bibitem[AN05]{aaronson_mixing_2005}
J.~Aaronson and H.~Nakada.
\newblock On the mixing coefficients of piecewise monotonic maps.
\newblock {\em Israel J. Math.}, 148:1--10, 2005.

\bibitem[AZ14]{aaronson_limit_2014}
J.~Aaronson and R.~Zweim{\"u}ller.
\newblock Limit theory for some positive stationary processes with infinite
  mean.
\newblock {\em Ann. Inst. Henri Poincar\'{e} Probab. Stat.}, 50(1):256--284,
  2014.

\bibitem[BGT87]{bingham_regular_1987}
N.~H. Bingham, C.~M. Goldie, and J.~L. Teugels.
\newblock {\em Regular Variation}.
\newblock Cambridge University Press, Cambridge, 1987.

\bibitem[Bra05]{bradley_basic_2005}
R.~C. Bradley.
\newblock Basic properties of strong mixing conditions. {A} survey and some
  open questions.
\newblock {\em Probab. Surv.}, 2:107--144, 2005.

\bibitem[Dou94]{doukhan_mixing:_1994}
P.~Doukhan.
\newblock {\em Mixing: Properties and Examples}.
\newblock Springer, New York, 1994.

\bibitem[DV86]{diamond_estimates_1986}
H.~G. Diamond and J.~D. Vaaler.
\newblock Estimates for partial sums of continued fraction partial quotients.
\newblock {\em Pacific Journal of Mathematics}, 122(1):73--82, 1986.

\bibitem[Fel71]{feller_introduction_1971}
W.~Feller.
\newblock {\em An introduction to probability theory and its applications
  {II}}.
\newblock Wiley, New York, 2 edition, 1971.

\bibitem[FJ03]{fan_spectral_2003}
A.~Fan and Y.~Jiang.
\newblock Spectral theory of transfer operators.
\newblock In {\em Complex dynamics and related topics: lectures from the
  {M}orningside {C}enter of {M}athematics}, volume~5 of {\em New Stud. Adv.
  Math.}, pages 63--128. Int. Press, Somerville, MA, 2003.

\bibitem[Gou10]{gouezel_characterization_2010}
S.~Gou\"{e}zel.
\newblock Characterization of weak convergence of {B}irkhoff sums for
  {G}ibbs-{M}arkov maps.
\newblock {\em Israel J. Math.}, 180:1--41, 2010.

\bibitem[Gou15]{gouezel_limit_2015}
S.~Gou\"ezel.
\newblock Limit theorems in dynamical systems using the spectral method.
\newblock In {\em Hyperbolic dynamics, fluctuations and large deviations},
  volume~89 of {\em Proc. Sympos. Pure Math.}, pages 161--193. Amer. Math.
  Soc., Providence, RI, 2015.

\bibitem[Hae93]{haeusler_nonstandard_1993}
E.~Haeusler.
\newblock A nonstandard law of the iterated logarithm for trimmed sums.
\newblock {\em Ann. Probab.}, 21(2):831--860, 1993.

\bibitem[Hay14]{haynes_quantitative_2014}
A.~Haynes.
\newblock Quantitative ergodic theorems for weakly integrable functions.
\newblock {\em Ergodic Theory Dynam.\ Systems}, 34(2):534--542, 2014.

\bibitem[HM87]{haeusler_laws_1987}
E.~Haeusler and D.~M. Mason.
\newblock Laws of the iterated logarithm for sums of the middle portion of the
  sample.
\newblock {\em Math. Proc. Cambridge Philos. Soc.}, 101(02):301--312, 1987.

\bibitem[Kes93]{kesten_convergence_1993}
H.~Kesten.
\newblock Convergence in distribution of lightly trimmed and untrimmed sums are
  equivalent.
\newblock {\em Mathematical Proceedings of the Cambridge Philosophical
  Society}, 113(3):615--638, 1993.

\bibitem[KMS16]{MR3585883}
M.~Kesseb\"ohmer, S.~Munday, and B.~O. Stratmann.
\newblock {\em Infinite ergodic theory of numbers}.
\newblock De Gruyter Graduate. De Gruyter, Berlin, 2016.

\bibitem[KS17]{kessebohmer_strong_2016}
M.~Kesseb{\"o}hmer and T.~Schindler.
\newblock Strong laws of large numbers for intermediately trimmed sums of
  i.i.d. random variables with infinite mean.
\newblock {\em J.\ Theoret.\ Probab.}, pages 1--19, 2017.

\bibitem[KS18]{kessebohmer_strong_2018}
M.~Kesseb{\"o}hmer and T.~Schindler.
\newblock Strong laws of large numbers for intermediately trimmed {B}irkhoff
  sums of observables with infinite mean.
\newblock {\em Stochastic Process. Appl. \emph{to appear}}, pages 1--45, 2018.

\bibitem[KS19]{kessebohmer_intermediately_2019}
M.~Kesseb{\"o}hmer and T.~Schindler.
\newblock Intermediately trimmed strong laws for {B}irkhoff sums on subshifts
  of finite type.
\newblock {\em preprint: arXiv:1901.04478}, 2019.

\bibitem[LY73]{lasota_existence_1973}
A.~Lasota and J.~A. Yorke.
\newblock On the existence of invariant measures for piecewise monotonic
  transformations.
\newblock {\em Trans. Amer. Math. Soc.}, 186:481--488 (1974), 1973.

\bibitem[Mor76]{mori_strong_1976}
T.~Mori.
\newblock The strong law of large numbers when extreme terms are excluded from
  sums.
\newblock {\em Z. Wahrsch. Verw. Gebiete}, 36(3):189--194, 1976.

\bibitem[Mor77]{mori_stability_1977}
T.~Mori.
\newblock Stability for sums of i.i.d. random variables when extreme terms are
  excluded.
\newblock {\em Z. Wahrsch. Verw. Gebiete}, 40(2):159--167, 1977.

\bibitem[MZ15]{melbourne_weak_2015}
I.~Melbourne and R.~Zweim\"{u}ller.
\newblock Weak convergence to stable {L}\'{e}vy processes for nonuniformly
  hyperbolic dynamical systems.
\newblock {\em Ann. Inst. Henri Poincar\'{e} Probab. Stat.}, 51(2):545--556,
  2015.

\bibitem[Pra60]{MR0123673}
J.~W. Pratt.
\newblock On interchanging limits and integrals.
\newblock {\em Ann. Math. Statist.}, 31:74--77, 1960.

\bibitem[Ryc83]{rychlik_bounded_1983}
M.~Rychlik.
\newblock Bounded variation and invariant measures.
\newblock {\em Studia Math.}, 76(1):69--80, 1983.

\bibitem[Sar06]{sarig_continuous_2006}
O.~Sarig.
\newblock Continuous phase transitions for dynamical systems.
\newblock {\em Comm. Math. Phys.}, 267(3):631--667, 2006.

\bibitem[TK10]{tyran_weak_2010}
M.~Tyran-Kami\'nska.
\newblock Weak convergence to {L}\'evy stable processes in dynamical systems.
\newblock {\em Stoch. Dyn.}, 10(2):263--289, 2010.

\bibitem[Zwe98]{zweimueller_ergodic_1998}
R.~Zweim\"uller.
\newblock Ergodic structure and invariant densities of non-{M}arkovian interval
  maps with indifferent fixed points.
\newblock {\em Nonlinearity}, 11(5):1263--1276, 1998.

\end{thebibliography}
\end{document}